\def\BState{\State\hskip-\ALG@thistlm}
\newtheorem{theorem}{\bf \emph{Theorem}}[section]
\newtheorem{lemma}[theorem]{\bf \emph{Lemma}}
\newtheorem{proposition}[theorem]{Proposition}
\newtheorem{remark}{Remark}
\newcommand{\transpose}{\mathsf{T}} 
\newcommand{\real}{\mathbb{R}}
\newcommand*{\QEDB}{\hfill\ensuremath{\square}}
\def\BibTeX{{\rm B\kern-.05em{\sc i\kern-.025em b}\kern-.08em
    T\kern-.1667em\lower.7ex\hbox{E}\kern-.125emX}}
\markboth{\journalname} {Anguluri \MakeLowercase{\textit{et al.}}:
  Centralized Versus Decentralized Detection of Attacks in Stochastic
  Interconnected Systems}
\begin{document}
\title{Centralized Versus Decentralized Detection of Attacks in
  Stochastic Interconnected Systems} 

\author{Rajasekhar Anguluri, \IEEEmembership{Student Member, IEEE},
  Vaibhav Katewa, \IEEEmembership{Member, IEEE}, and Fabio Pasqualetti
  \IEEEmembership{Member, IEEE} \thanks{This material is based upon
    work supported by the awards ARO 71603NSYIP, NSF ECCS1405330, and
    UCOP LFR-18-548175. The authors are with the Department of
    Mechanical Engineering, University of California, Riverside,
    \{\href{mailto:rangu003@ucr.edu}{\texttt{rangu003}},\href{mailto:vkatewa@engr.ucr.edu}{\texttt{vkatewa}},\href{mailto:fabiopas@engr.ucr.edu}{\texttt{fabiopas}}\}\texttt{@engr.ucr.edu}.}}
	
\maketitle

\begin{abstract}
  We consider a security problem for interconnected systems governed
  by linear, discrete, time-invariant, stochastic dynamics, where the
  objective is to detect exogenous attacks by processing the measurements
  at different locations. We consider two classes of detectors, namely
  centralized and decentralized detectors, which differ primarily in
  their knowledge of the system model. In particular, a decentralized
  detector has a model of the dynamics of the isolated subsystems, but
  is unaware of the interconnection signals that are exchanged among
  subsystems. Instead, a centralized detector has a model of the
  entire dynamical system. We characterize the performance of the two
  detectors and show that, depending on the system and attack parameters, each of the detectors can outperform the
  other. In particular, it may be possible for the decentralized
  detector to outperform its centralized counterpart, despite having
  less information about the system dynamics, and this surprising
  property is due to the nature of the considered attack detection
  problem. To complement our results on the detection of attacks, we
  propose and solve an optimization problem to design attacks that
  maximally degrade the system performance while maintaining a
  pre-specified degree of detectability. Finally, we validate our
  findings via numerical studies on an electric power system.
\end{abstract}

\section{Introduction}\label{sec:introduction}
Cyber-physical systems are becoming increasingly more complex and
interconnected. In fact, different cyber-physical systems typically
operate in a connected environment, where the performance of each
system is greatly affected by neighboring units. An example is the
smart grid, which arises from the interconnection of smaller power
systems at different geographical locations, and whose performance
depends on other critical infrastructures including the transportation
network and the water system. Given the interconnected nature of large
cyber-physical systems, and the fact that each subsystem usually has
only partial knowledge or measurements of other interconnected units,
the security question arises as to whether sophisticated attackers can
hide their action to the individual subsystems while inducing
system-wide critical perturbations.

In this work we investigate whether, and to what extent, coordination
among different subsystems and knowledge of the global system dynamics
is necessary to detect attacks in interconnected systems. In fact,
while existing approaches for the detection of faults and attacks
typically rely on a centralized detector \cite{fp-fd-fb:10y, YL-AD:16, JC-RJ:99}, the use of local
detectors would not only be computationally convenient, but it would
also prevent the subsystems from disclosing private information about
their plants. As a counterintuitive result, we will show that local
and decentralized detectors can, in some cases, outperform a
centralized detector, thus supporting the development of distributed
and localized theories and tools for the security of cyber-physical
systems.

\noindent
\textbf{Related work:} Centralized attack detectors have been the
subject of extensive research in the last years \cite{YY-QZ-FS-QW-TB, HF-PT-SD, SS-MG, LL-ME-QD-VA-ZH, HZ-PC-LS-JC, YM-BS:16, FM-QZ-MP-GJ, YC-SK-JM, CK-IH:18}, where the detector has complete knowledge of the system dynamics and all
measurements. Furthermore, these studies use techniques from various disciplines including game theory, information theory, fault detection and signal processing, and have a wide variety of applications \cite{YL-AD:16}. Instead, decentralized attack detectors, where each local detector decides on attacks based on partial information and measurements about the system, and local detectors cooperate to improve their detection capabilities, have received only limited and recent attention \cite{fd-fp-fb:11t, NI-IS:14, MN-YY-XW:18, JZ-LM:18, EM-AK-DK:15}. 

Decentralized detection schemes have also been studied for fault detection and isolation (FDI). In such schemes, multiple local detectors make inferences about either the global or local process, and transmit their local decisions to a central entity, which uses appropriate fusion rules to make the global decision\cite{RRT-NRS:81, PK:97, JC-VV;03, SA-VV-DL:05, vr-mm-cg:15}. Methods to improve the detection performance by exchanging information among the local detectors have also been proposed \cite{XG-CE:08, IS-AM-HS-KH:11, RM-TP-MM:12}. These decentralized algorithms are typically complex
\cite{fp-fd-fb:10y}, their effectiveness in detecting unknown and
unmeasurable attacks is difficult to characterize, and their
performance is believed to be inferior when compared to their
centralized counterparts. To the best of our knowledge, a rigorous
comparison of centralized and decentralized attack
detection schemes is still lacking, which prevents us from assessing
whether, and to what extent, decentralized and distributed schemes
should be employed for attack detection and identification.

\noindent
\textbf{Main contributions:}\footnote {In a preliminary version of this paper [26], we used asymptotic approximations to compare the detectors’ performance. Instead, in this paper we provide stronger, tight, and non-asymptotic results without using any approximations. Further, it contains new results on the design of optimal undetectable attacks, and a characterization of the performance degradation induced by such attacks. In addition, an illustration of the results using electrical power grid is also presented.} This paper features three main
contributions. First, we propose centralized and decentralized schemes
to detect unknown and unmeasurable sensor attacks in stochastic
interconnected systems. Our detection schemes are based on the
statistical decision theoretic framework that falls under the category
of simple versus composite hypotheses testing. We characterize the
probability of false alarm and the probability of detection for both
detectors, as a function of the system and attack parameters. Second,
we compare the performance of the centralized and decentralized
detectors, and show that each detector can outperform the other for
certain system and attack configurations. We discuss that this
counterintuitive phenomenon is inherent with the simple versus
composite nature of the considered attack detection problem, and
provide numerical examples of this behavior. Third, we formulate and
solve an optimization problem to design attacks against interconnected
systems that maximally affect the system performance as measured by
the mean square deviation of the state while remaining undetected by
the centralized and decentralized detectors with a pre-selected
probability. Finally, we validate our theoretical findings on the IEEE
RTS-96 power system model.

\noindent
\textbf{Paper organization:} The rest of the paper is organized as follows. Section \ref{sec: problem setup and preiliminary notions} contains our problem formulation. In Section \ref{sec: detection
	framework}, we present our local, decentralized, and centralize detectors, and characterize their performance. Section \ref{sec:
	CDA} contains our main results regarding the comparison of the performance of centralized and decentralized detectors. Section \ref{sec: DOOA} contains the design of optimal undetectable attacks. Finally, Section \ref{sec: simulations} contains our numerical studies, and Section \ref{sec: conclusions} concludes the paper. 
%

\noindent
\textbf{Mathematical notation:} The following notation will be adopted
throughout the paper. Let $X_1,\ldots,X_N$ be arbitrary sets, then
$\bigcup_{i=1}^N X_i$ and $\bigcap_{i=1}^N X_i$ denotes the union and
intersection of the sets, respectively.  $\mathrm{Trace} (\cdot)$,
$\mathrm{Rank}(\cdot)$, and $\mathrm{Null}(\cdot)$ denote the trace,
rank, and null space of a matrix, respectively. $Q>0$ ($Q\geq0$)
denotes that $Q$ is a positive definite (positive semi definite)
matrix. $\otimes$ denotes the Kronecker product for
matrices. $\mathrm{blkdiag}(A_1, \cdots , A_N)$ denotes the block
diagonal matrix with $A_1, \cdots, A_N$ as diagonal entries. The
identity matrix is denoted by $I$ (or $I_\text{dim}$ to denote
dimension explicitly). $\mathrm{Pr}[\mathcal{E}]$ denotes the
probability of the event $\mathcal{E}$. The mean and covariance of a
real or vector valued random variable $Y$ is denoted by
$\mathbb{E}[Y]$ and $\mathrm{Cov}[Y]$. Further, for a real valued
random variable $Y$, we denote the standard deviation as
$\mathrm{SD}[Y]$. If $Y$ follows a Gaussian distribution, we denote it
by $Y\sim\mathcal{N}\left(\mathbb{E}[Y],
  \mathrm{Cov}[Y]\right)$. Instead, if $Y$ follows a noncentral
chi-squared distribution, we denote it by $Y\sim \chi^2(p,\lambda)$,
where $p$ is the degrees of freedom and $\lambda$ is the
non-centrality parameter. For $Y \sim \chi^2(p,\lambda)$ and $\tau \geq 0$, $Q(\tau;p,\lambda)$ denotes the
complementary cumulative distribution function of $Y$.

\section{Problem setup and preliminary notions}\label{sec: problem setup and preiliminary notions}
We consider an interconnected system with $N$ subsystems, where each
subsystem obeys the discrete-time linear dynamics
\begin{align}\label{eq: subystem without attack}
  \begin{split}
    x_i(k+1)&=A_{ii}x_i(k) + B_iu_i(k)+ w_i(k),\\
    y_i(k)&=C_ix_i(k)+v_i(k), 
  \end{split}
\end{align}
with $i \in \{1,\ldots,N\}$. In the above equation, the vectors
$x_i \in \mathbb{R}^{n_i}$ and $y_i \in \mathbb{R}^{r_i} $ are the
state and measurement of the $i-$th subsystem, respectively. The
process noise $w_i(k)\sim \mathcal{N}(0, \Sigma_{w_i})$ and the
measurement noise $v_i(k)\sim \mathcal{N}(0,\Sigma_{v_i})$ are
independent stochastic processes, and $w_i$ is assumed to be
independent of $v_i$, for all $k\geq 0$. Further, the noise vectors
across different subsystems are assumed to be independent at all
times. The $i-$th subsystem is coupled with the other subsystems
through the term $B_iu_i$, which takes the form
\begin{align*}
  B_i&=
       \begin{bmatrix}
         A_{i1} & \cdots & A_{i,i-1} & A_{i,i+1} & \cdots & A_{iN}
       \end{bmatrix}
                                                            ,
                                                            \text{ and}\\
  u_i
     &=
       \begin{bmatrix}
         x_1^\transpose & \cdots & x_{i-1}^\transpose &
         x_{i+1}^\transpose &  \cdots & x_N^\transpose
       \end{bmatrix}^\transpose.
\end{align*}
 The input
$B_i u_i = \sum_{j \neq i}^N A_{ij}x_j$ represents the cumulative
effect of subsystems $j$ on subsystem $i$. Hence, we refer to $B_i$ as
to the interconnection matrix, and to $u_i$ as to the interconnection
signal, respectively.

We allow for the presence of attacks compromising the dynamics of the
subsystems, and model such attacks as exogenous unknown inputs. In
particular, the dynamics of the $i-$th subsystem under the attack
$u^a_i$ with matrix $B^a_i$ read as
\begin{align}\label{eq: subsystem with attack}
  \begin{split}
    x_i(k+1)&=A_{ii}x_i(k)+B_iu_i(k)+B^a_iu^a_i(k)+w_i(k), 
  \end{split}
\end{align}
where $u_i^a \in
\real^{m_i}$. In vector form, the dynamics of the interconnected
system under attack read as
\begin{align}\label{eq: overall with attack}
\begin{split}
 x(k+1)&=A x(k) + B^a u^a(k) + w(k),\\
y(k)&=Cx(k)+v(k),
\end{split}
\end{align}
where
$\phi=\begin{bmatrix} \phi_1^\transpose & \ldots &\phi_N^\transpose
 \end{bmatrix}$, with $\phi$ standing for $x\in \mathbb{R}^n$,
$w \in \mathbb{R}^n$, $u^a \in \mathbb{R}^m$, $y \in \mathbb{R}^r$,
$v \in \mathbb{R}^r$, $n=\sum_{i=1}^{N}n_i$, $m = \sum_{i =1}^N m_i$, and
$r=\sum_{i}^{N}r_i$. Moreover, as the components of the vectors $w$
and $v$ are independent and Gaussian,
$w \sim \mathcal{N}(0,\Sigma_{w})$ and
$v \sim \mathcal{N}(0,\Sigma_{v})$, respectively, where
$\Sigma_w=\mathrm{blkdiag}\left(\Sigma_{w_1},\ldots, \Sigma_{w_N}
\right)$ and
$\Sigma_v=\mathrm{blkdiag}\left(\Sigma_{v_1},\ldots, \Sigma_{v_N}
\right)$. Further,
\begin{align*}
  A = 
  \begin{bmatrix}
    A_{11} & \cdots & A_{1N}\\
    \vdots & \ddots & \vdots\\
    A_{N1} & \cdots & A_{NN}\\
  \end{bmatrix}, 
  B^a=\begin{bmatrix}
  B_{1}^a & \cdots & 0\\
  \vdots & \ddots & \vdots\\
  0 & \cdots & B^a_{N}\\
  \end{bmatrix}, 
\end{align*}
and $C=\mathrm{blkdiag}\left(C_1,\ldots,C_N\right)$.
 
We assume that each subsystem is equipped with a \textit{local
  detector}, which uses the local measurements and knowledge of the
local dynamics to detect the presence of local attacks. In particular,
the $i-$th local detector has access to the measurements $y_i$ in
\eqref{eq: subystem without attack}, knows the matrices $A_{ii}$, $B_i$,
and $C_i$, and the statistical properties of the noise vectors $w_i$
and $v_i$. Yet, the $i-$th local detector does not know or measure the
interconnection input $u_i$, and the attack parameters $B_i^a$ and
$u_i^a$. Based on this information, the $i-$th local detector aims to
detect whether $B_i^a u_i^a \neq 0$. The decisions of the local
detectors are then processed by a \textit{decentralized detector},
which aims to detect the presence of attacks against the whole
interconnected system based on the local decisions. Finally, we assume
the presence of a \textit{centralized detector}, which has access to
the measurements $y$ in \eqref{eq: overall with attack}, and knows the
matrix $A$ and the statistical properties of the overall noise vectors
$w$ and $v$. Similarly to the local detectors, the centralized detector
does not know or measure the attack parameters $B^a$ and $u^a$, and
aims to detect whether $B^a u^a \neq 0$. We postpone a detailed
description of our detectors to Section \ref{sec: detection
  framework}. To conclude this section, note that the decentralized
and centralized detectors have access to the same measurements. Yet,
these detectors differ in their knowledge of the system dynamics,
which determines their performance as explained in Section~\ref{sec:
  CDA}.

\begin{remark}{\it \textbf{(Control input and initial state)}}\label{rmk: control and initial state}
  The system setup in \eqref{eq: subsystem with attack} and \eqref{eq:
    overall with attack} typically includes a control input. However,
  assuming that each subsystem knows its control input, it can be omitted without affecting generality. Further, as the
  detectors do not have information about the initial state, we
  assume without loss of generality, that the initial state is
  deterministic and unknown to the detectors. \QEDB
\end{remark}

\section{Local, decentralized, and centralized detectors}\label{sec:
  detection framework}
In this section we formally describe our local, decentralized, and
centralized detectors, and characterize their performance as a
function of the available measurements and knowledge of the system
dynamics. To this aim, let $T>0$ be an arbitrary time horizon and define the vectors
\begin{align}\label{eq: agree local}
  Y_i =
  \begin{bmatrix}
    y_i^\transpose (1) & y_i^\transpose (2)  & \cdots & y_i^\transpose (T)
  \end{bmatrix}^\transpose,
\end{align}
which contains the measurements available to the $i-$th detector, and
\begin{align}\label{eq: agree central}
  Y_c =
\begin{bmatrix}
y^\transpose (1) & y^\transpose (2)  & \cdots & y^\transpose (T)
\end{bmatrix}^\transpose,
\end{align}
which contains the measurements available to the centralized
detector. Both the local and centralized detectors perform the following
three operations in order:
\begin{enumerate}
\item Collect measurements as in \eqref{eq: agree local} and
  \eqref{eq: agree central}, respectively;

\item Process measurements to filter unknown variables; and

\item Perform statistical hypotheses testing to detect attacks
  (locally or globally) using the processed measurements.
\end{enumerate}
The decisions of the local detectors are then used by the
decentralized detector, which triggers an alarm if any of the local
detectors does so. We next characterize how the detectors process
their measurements and perform attack detection via statistical
hypothesis testing.

\subsection{Processing of measurements}\label{subsec: pom}
The measurements \eqref{eq: agree local} and \eqref{eq: agree central} depend on parameters that are unknown to the detectors,
namely, the system initial state and the
interconnection signal (although the process and measurement noises
are also unknown, the detectors know their statistical
properties). Thus, to test for the presence of attacks, the detectors
first process the measurement vectors to eliminate their dependency on
the unknown parameters. To do so, using equations \eqref{eq: subystem
  without attack} and \eqref{eq: subsystem with attack}, define the
observability matrix and the attack, interconnection, and noise forced
response matrices of the $i-$th subsystem as
\begin{align*}
  \mathcal{O}_i &=
                            \begin{bmatrix}
                              C_iA_{ii} \\ \vdots \\ C_i A_{ii}^{T}
                            \end{bmatrix}, \;
  \mathcal{F}^{(a)}_i=
  \begin{bmatrix}
    C_iB^a_i & \ldots & 0\\
    \vdots & \ddots & \vdots\\
    C_iA_{ii}^{T-1}B^a_i & \ldots & C_iB^a_i
  \end{bmatrix}, 
\end{align*}
\begin{align*}
  \mathcal{F}_{i}^{(u)}\begingroup
  \setlength\arraycolsep{1.2pt}=
  \begin{bmatrix}
    C_iB_i & \ldots & 0\\
    \vdots & \ddots & \vdots\\
    C_iA_{ii}^{T-1}B_i  & \ldots & C_iB_i
  \end{bmatrix},
                                   \endgroup
                                   \;
                                   \mathcal{F}^{(w)}_i\begingroup
                                   \setlength\arraycolsep{1.6pt}=\begin{bmatrix}
                                     C_i & \ldots & 0\\
                                     \vdots  & \ddots & \vdots\\
                                     C_iA_{ii}^{T-1} &  \ldots & C_i
                                   \end{bmatrix}\endgroup.                                         
\end{align*}
Analogously, for the system model \eqref{eq: overall with attack}
define the matrices $\mathcal{O}_{c}$, $\mathcal{F}^{(w)}_{a}$, and
$\mathcal{F}^{(w)}_{c}$, which are constructed as above by replacing
$A_{i}$, $B_i^a$, and $C_i$ with $A$, $B^a$, and $C$,
respectively. The measurements \eqref{eq: agree local} and \eqref{eq:
  agree central} can be written as follows:
\begin{align}
  Y_i&=\mathcal{O}_ix_i(0)+\mathcal{F}_{i}^{(u)}U_i +
       \mathcal{F}_i^{(a)} U^a_i + \mathcal{F}_i^{(w)}W_i+V_i , \label{eq: expanded measurements local}\\
  Y_c&=\mathcal{O}_{c}x(0)+\mathcal{F}^{(a)}_{c}U^a +
       \mathcal{F}_{c}^{(w)}W+V, \label{eq: expanded measurements central}
\end{align}
where
$U_i= \begin{bmatrix} u_i^\transpose(0) & u_i^\transpose(1) & 
  \cdots & u_i^\transpose(T-1)
\end{bmatrix}^\transpose$. The vectors $U^a_i$, $U^a$, $W_i$ and $W$
are the time aggregated signals of $u^a_i$, $u^a$, $w_i$, and $w$,
respectively, and are defined similarly to $U_i$. Instead,
$V_i=\begin{bmatrix} v_i^\transpose(1)& v_i^\transpose(2) & \cdots &
  v_i^\transpose(T)
\end{bmatrix}^\transpose$, and $V$ is defined similarly to $V_i$. To
eliminate the dependency from the unknown variables, let $N_i$ and
$N_{c}$ be bases of the left null spaces of the matrices
$\begin{bmatrix} \mathcal{O}_i & \mathcal{F}^{(u)}_i \end{bmatrix}$
and $\mathcal{O}_{c}$, respectively, and define the processed
measurements as
\begin{align}\label{eq: kernels}
  \begin{split}
    \widetilde Y_i&=N_i Y_i= N_i\left[\mathcal{F}_i^{(a)}U^a_i + \mathcal{F}_i^{(w)}W_i+V_i\right],\\
    \widetilde Y_{c}&= N_{c}
    Y_{c}=N_{c}\left[\mathcal{F}^{(a)}_{c}U^a +
      \mathcal{F}_{c}^{(w)}W+V\right] ,
  \end{split}
\end{align}
where the expressions for $\widetilde Y_i$ and $\widetilde Y_{c}$
follows from \eqref{eq: expanded measurements local} and \eqref{eq: expanded measurements central}. Notice that, in
the absence of attacks ($U^a=0$), the measurements $\widetilde Y_{i}$
and $\widetilde Y_{c}$ depend only on the system noise. Instead, in
the presence of attacks, such measurements also depend on the attack
vector, which may leave a signature for the detectors.\footnote{If
  $\mathrm{Im}({B}_i^a)\subseteq \mathrm{Im}({B}_i)$, then
  $N_i \mathcal{F}^{(a)}_i=0$ and the processed measurements do not
  depend on the attack. Thus, our local detection technique can only
  be successful against attacks that do not satisfy this condition.}
We now characterize the statistical properties of $\widetilde Y_i$ and
$\widetilde Y_{c}$.

\begin{lemma} {\it \textbf{(Statistical properties of the processed measurements)}}\label{thm: stat prop of processed}
  The processed measurements $\widetilde Y_i$ and $\widetilde Y_c$
  satisfy
  \begin{align}\label{eq: mean and covariance}
    \begin{split}
      \widetilde Y_i & \sim \mathcal{N}\left(\beta_i,\Sigma_i\right),
      \text{ for all } i \in \{1,\ldots,N\}, \text{ and } \\
      \widetilde Y_c & \sim \mathcal{N}\left(\beta_c,\Sigma_c\right),
    \end{split}
  \end{align}
  where
  \begin{align}\label{eq: beta and sigma original}
  \begin{split}
  \beta_i&=N_i\mathcal{F}^{(a)}_iU_i^a,\\
\beta_c&= N_c\mathcal{F}^{(a)}_cU^a,\\
\Sigma_i&= N_i\left[ \left( \mathcal{F}^{(w)}_i\right) \left(I_T\otimes \Sigma_{w_i}\right) \left( \mathcal{F}^{(w)}_i\right)^\transpose+\left(I_T\otimes \Sigma_{v_i}\right)\right] N_i^\transpose,\\
\Sigma_c&= N_c\left[\left( \mathcal{F}^{(w)}_c\right) \left(I_T\otimes
\Sigma_{w}\right) \left(
\mathcal{F}^{(w)}_c\right)^\transpose+\left(I_T\otimes
\Sigma_{v}\right)\right]N_c^\transpose.
  \end{split}
  \end{align}
\end{lemma}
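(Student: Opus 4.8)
The plan is to treat \eqref{eq: kernels} as a deterministic affine transformation of the independent Gaussian noise vectors and to exploit the fact that affine images of Gaussians are Gaussian, so that it suffices to compute the resulting mean and covariance. I would carry out the argument in detail for $\widetilde Y_i$; the centralized statement for $\widetilde Y_c$ then follows verbatim after replacing $(A_{ii}, B_i^a, C_i, \Sigma_{w_i}, \Sigma_{v_i}, N_i)$ with $(A, B^a, C, \Sigma_w, \Sigma_v, N_c)$ and the subsystem signals with their global counterparts.

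First I would record the distributions of the aggregated noise vectors. Since the $w_i(k)\sim\mathcal{N}(0,\Sigma_{w_i})$ are independent across $k$, stacking them gives $W_i\sim\mathcal{N}(0, I_T\otimes\Sigma_{w_i})$, where the Kronecker structure is precisely the block-diagonal covariance produced by temporal independence. Likewise $V_i\sim\mathcal{N}(0, I_T\otimes\Sigma_{v_i})$, and $W_i$ is independent of $V_i$ by assumption. Crucially, the attack $U_i^a$ is an unknown but deterministic signal (it is the parameter under test), so it enters \eqref{eq: kernels} only as a constant offset and does not contribute to the covariance.

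Next, applying linearity of expectation to $\widetilde Y_i = N_i\mathcal{F}_i^{(a)}U_i^a + N_i\mathcal{F}_i^{(w)}W_i + N_i V_i$ and using $\mathbb{E}[W_i]=\mathbb{E}[V_i]=0$ yields $\mathbb{E}[\widetilde Y_i]=N_i\mathcal{F}_i^{(a)}U_i^a=\beta_i$. For the covariance, I would use that $\mathrm{Cov}[M\xi]=M\,\mathrm{Cov}[\xi]\,M^\transpose$ for any deterministic matrix $M$, together with the independence of $W_i$ and $V_i$, which makes the cross-covariance terms vanish. This produces $\mathrm{Cov}[\widetilde Y_i] = N_i\mathcal{F}_i^{(w)}(I_T\otimes\Sigma_{w_i})(\mathcal{F}_i^{(w)})^\transpose N_i^\transpose + N_i(I_T\otimes\Sigma_{v_i})N_i^\transpose$, which is exactly $\Sigma_i$ after factoring out $N_i(\cdot)N_i^\transpose$.

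Finally, Gaussianity follows because $\widetilde Y_i$ is an affine function of the jointly Gaussian vector $(W_i^\transpose\ V_i^\transpose)^\transpose$, and affine images of Gaussians are Gaussian; combined with the computed first two moments this gives $\widetilde Y_i\sim\mathcal{N}(\beta_i,\Sigma_i)$. I do not anticipate a genuine obstacle, since the result is essentially bookkeeping built on \eqref{eq: kernels}; the only points requiring care are justifying the Kronecker-product form of the aggregated noise covariances and confirming that temporal independence, and the independence of $W_i$ and $V_i$, eliminate all cross-time and cross-noise covariance contributions.
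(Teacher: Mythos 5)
Your proposal is correct and follows essentially the same route as the paper's own proof: compute the mean by linearity of expectation (the deterministic attack term is the only contribution), obtain the covariance via $\mathrm{Cov}[M\xi]=M\,\mathrm{Cov}[\xi]\,M^\transpose$ together with the independence of process and measurement noise and their temporal i.i.d.\ structure (which yields the $I_T\otimes\Sigma$ blocks), and conclude Gaussianity because linear/affine transformations preserve it. The only cosmetic difference is that you state the Kronecker-product covariances of the stacked noise vectors explicitly up front, whereas the paper folds this into its step (b); the substance is identical.
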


\medskip A proof of Lemma \ref{thm: stat prop of processed} is
postponed to the Appendix. From Lemma \ref{thm: stat prop of
  processed}, the mean vectors $\beta_i$ and $\beta_c$ depend on the
attack vector, while the covariance matrices $\Sigma_i$ and $\Sigma_c$
are independent of the attack. This observation motivates us to
develop a detection mechanism based on the mean of the processed
measurements, rather the covariance matrices.

\subsection{Statistical hypothesis testing framework}\label{sec: SHT}
In this section we detail our attack detection mechanism, which we
assume to be the same for all local and centralized detectors, and we
characterize its false alarm and detection probabilities. We start by
analyzing the test procedure of the $i-$th local detector. Let $H_0$
be the null hypothesis, where $\beta_i = 0$ and the system is not
under attack, and let $H_1$ be the alternative hypothesis, where
$\beta_i \neq 0$ and the system is under attack. To decide which
hypothesis is true, or equivalently whether the mean value of the
processed measurements is zero, we resort to the generalized
log-likelihood ratio test (GLRT):
\begin{align}\label{eq: test_stat_local}
  \Lambda_i\triangleq {\widetilde  Y_i}^\transpose \Sigma_i^{-1}{
  \widetilde  Y_i}\overset{H_1}{\underset{H_0}{\gtrless}} \tau_i ,
\end{align}
where the threshold $\tau_i \geq 0$ is selected based on the desired
false alarm probability of the test \eqref{eq: test_stat_local}
\cite{HVP:94}. For a statistical hypothesis testing problem, the false
alarm probability equals the probability of deciding for $H_1$ when
$H_0$ is true, while the detection probability equals the probability
of deciding for $H_1$ when $H_1$ is true. While the former is used for
tuning the threshold, the latter is used for measuring the performance
of the test. Formally, the false alarm and detection probabilities of
\eqref{eq: test_stat_local} are the probabilities that are conditioned
on the hypothesis $H_0$ and $H_1$, respectively, and are
symbolically denoted as
\begin{align*}
  P^F_i=\mathrm{Pr}\left[\Lambda_i\geq \tau_i| H_0\right]
  \text{ and } P^D_i= \mathrm{Pr}\left[ \Lambda_i\geq \tau_i |
  H_1\right].
\end{align*}
Similarly, the centralized detector test is defined as
\begin{align}\label{eq: test_stat_central}
  \Lambda_c\triangleq {\widetilde
  Y_c}^\transpose\Sigma_c^{-1}{\widetilde
  Y_c}\overset{H_1}{\underset{H_0}{\gtrless}} \tau_c ,
\end{align}
where $\tau_c \geq 0$ is a preselected threshold, and its false alarm
and detection probabilities are denoted as $P^F_c$ and $P^D_c$. We
next characterize the false alarm and detection probabilities of the
detectors with respect to the system and attack parameters.

\begin{lemma}{\it \textbf{(False alarm and detection probabilities of
      local and centralized detectors)}}\label{lma: PF and PD via Q-functions}
  The false alarm and the detection probabilities of the tests
  \eqref{eq: test_stat_local} and \eqref{eq: test_stat_central} are,
  respectively,
  \begin{align}\label{eq: test probabilities}
    \begin{split}
      P^F_i&= Q(\tau_i;p_i,0), \; P^D_i = Q(\tau_i;p_i,\lambda_i),
      \text{
        and }\\
      P^F_c&=Q(\tau_c;p_c,0), \; P^D_c = Q(\tau_c;p_c,\lambda_c) ,
    \end{split}
  \end{align}
  where 
  \begin{align}\label{eq: lambda 1}
    \begin{split}
      p_i &= \mathrm{Rank}(\Sigma_i), \; p_c=\mathrm{Rank}(\Sigma_c),\\
      \lambda_i &=(U_i^a)^\transpose M_i(U_i^a), \;
      \lambda_c=(U^a)^\transpose M_c(U^a) ,
    \end{split}
  \end{align}
  and
  \begin{align}\label{eq: Mi and Mc}
    \begin{split}
      M_i &= \left(
        N_i\mathcal{F}^{(a)}_i\right)^\transpose \Sigma_i^{-1}\left(
       N_i\mathcal{F}^{(a)}_i\right),\\
      M_c &= \left( N_c\mathcal{F}^{(a)}_c\right)
      ^\transpose\Sigma_c^{-1}\left(
        N_c\mathcal{F}^{(a)}_c\right).
    \end{split}
  \end{align}
\end{lemma}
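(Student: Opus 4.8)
The plan is to recognize each test statistic as a quadratic form in a Gaussian vector whose law is already pinned down by Lemma~\ref{thm: stat prop of processed}, and then to invoke the standard characterization of such quadratic forms as (noncentral) chi-squared variables. I would treat the local and centralized cases in parallel, since they are structurally identical.

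First, for the local detector, Lemma~\ref{thm: stat prop of processed} gives $\widetilde Y_i \sim \mathcal{N}(\beta_i,\Sigma_i)$, so $\Lambda_i=\widetilde Y_i^\transpose \Sigma_i^{-1}\widetilde Y_i$ is a quadratic form weighted by the inverse covariance. The key step is a whitening transformation: I would set $Z_i=\Sigma_i^{-1/2}\widetilde Y_i$, where $\Sigma_i^{1/2}$ is the symmetric positive-definite square root of $\Sigma_i$, so that $Z_i\sim\mathcal{N}(\Sigma_i^{-1/2}\beta_i,\,I_{p_i})$ with $p_i$ the dimension of $\widetilde Y_i$. Then $\Lambda_i=Z_i^\transpose Z_i$ is a sum of squares of $p_i$ independent unit-variance Gaussians, which by definition follows $\chi^2(p_i,\lambda_i)$ with noncentrality $\lambda_i=\|\Sigma_i^{-1/2}\beta_i\|^2=\beta_i^\transpose\Sigma_i^{-1}\beta_i$.

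It then remains to evaluate this law under each hypothesis and to express $\lambda_i$ through the attack vector. Under $H_0$ we have $\beta_i=0$, hence $\lambda_i=0$ and $\Lambda_i\sim\chi^2(p_i,0)$, giving $P^F_i=\mathrm{Pr}[\Lambda_i\geq\tau_i\mid H_0]=Q(\tau_i;p_i,0)$. Under $H_1$, substituting $\beta_i=N_i\mathcal{F}^{(a)}_iU_i^a$ from \eqref{eq: beta and sigma original} yields $\lambda_i=(U_i^a)^\transpose (N_i\mathcal{F}^{(a)}_i)^\transpose\Sigma_i^{-1}(N_i\mathcal{F}^{(a)}_i)U_i^a=(U_i^a)^\transpose M_i U_i^a$, which is precisely \eqref{eq: Mi and Mc}, so that $P^D_i=Q(\tau_i;p_i,\lambda_i)$. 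The identical argument applied to $\widetilde Y_c\sim\mathcal{N}(\beta_c,\Sigma_c)$ with $\beta_c=N_c\mathcal{F}^{(a)}_cU^a$ delivers the centralized probabilities.

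The one point requiring care, and the only nonroutine ingredient, is the existence of $\Sigma_i^{-1/2}$ together with the identification of the degrees of freedom with $\mathrm{Rank}(\Sigma_i)$. I would verify that $\Sigma_i$ is positive definite: since $N_i$ is a basis of a left null space it has full row rank, and the bracketed matrix in \eqref{eq: beta and sigma original} is positive definite because the additive term $I_T\otimes\Sigma_{v_i}$ is (the measurement-noise covariance being positive definite). Consequently $\Sigma_i=N_i[\,\cdots\,]N_i^\transpose>0$, its inverse and symmetric square root are well defined, and $p_i=\mathrm{Rank}(\Sigma_i)$ coincides with $\dim\widetilde Y_i$; the same reasoning gives $\Sigma_c>0$. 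Everything else is the textbook quadratic-form computation.
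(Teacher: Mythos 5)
Your proposal is correct and follows essentially the same route as the paper: both recognize $\Lambda_i$ and $\Lambda_c$ as inverse-covariance-weighted quadratic forms in the Gaussian vectors of Lemma~\ref{thm: stat prop of processed}, identify their laws as $\chi^2(p_i,\lambda_i)$ and $\chi^2(p_c,\lambda_c)$ under the two hypotheses, and obtain \eqref{eq: lambda 1}--\eqref{eq: Mi and Mc} by substituting $\beta_i=N_i\mathcal{F}^{(a)}_iU_i^a$ and $\beta_c=N_c\mathcal{F}^{(a)}_cU^a$. The only difference is cosmetic: where the paper cites a textbook result (Anderson, Theorem~3.3.3) for the chi-squared characterization, you unpack it via the whitening transformation $Z_i=\Sigma_i^{-1/2}\widetilde Y_i$, and you additionally make explicit the positive definiteness of $\Sigma_i$ and $\Sigma_c$ (via full row rank of $N_i$ and the term $I_T\otimes\Sigma_{v_i}$), which the paper assumes implicitly when writing $\Sigma_i^{-1}$.
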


\medskip Lemma \ref{lma: PF and PD via Q-functions}, whose proof is
postponed to the Appendix, allows us to compute the false alarm and
detection probabilities of the detectors using the decision
thresholds, the system parameters, and the attack vector. Moreover,
for fixed $P^F_i$ and $P^F_c$, the detection thresholds are computed
as $\tau_c = Q^{-1}(P^F_c;p_c,0)$ and $\tau_i = Q^{-1}(P^F_i;p_i,0)$,
where $Q^{-1}(\cdot)$ is the inverse of the complementary Cumulative
Distribution Functions (CDF) that is associated with a central
chi-squared distribution. The parameters $p_i$, $p_c$ and $\lambda_i$,
$\lambda_c$ in Lemma \ref{lma: PF and PD via Q-functions} are referred
to as \textit{degrees of freedom} and \textit{non-centrality}
parameters of the detectors.

\begin{remark}{\it {\textbf{(System theoretic interpretation of detection
      probability parameters)}}}\label{rmk: sys theor interp}
  The degrees of freedom and the non-centrality parameters quantify
  the knowledge of the detectors about the system dynamics and the
  energy of the attack signal contained in the processed
  measurements. In particular:

  \smallskip
  \noindent
  \textit{(Degrees of freedom $p_i$)} The detection probability and
  the false alarm probability are both increasing functions of the
  degrees of freedom $p_i$, because the $Q$ function in \eqref{eq:
    test probabilities} is an increasing function of $p_i$. Thus,
  increasing $p_i$ by, for instance, increasing the number of sensors
  or the horizon $T$, does not necessarily lead to an improvement of
  the detector performance.

  
  \smallskip
  \noindent
  \textit{(Non-centrality parameter $\lambda_i$)} The non-centrality
  parameter $\lambda_i$ measures the energy of the attack signal
  contained in the processed measurements. In the literature of
  communication and signal processing, the non-centrality parameter is
  often referred to as signal to noise ratio (SNR)
  \cite{HVP:94}. For fixed
  $\tau_i$ and $p_i$, the detection probability increases
  monotonically with $\lambda_i$, and approaches the false alarm
  probability as $\lambda_i$ tends to zero. 


  \smallskip
  \noindent
  \textit{(Decision threshold $\tau_i$)} For fixed $\lambda_i$ and
  $p_i$, the probability of detection and the false alarm probability
  are monotonically decreasing functions of the detection threshold
  $\tau_i$. This is due to the fact that the complementary CDFs, which
  define the false alarm and detection probabilities, are decreasing
  functions of $\tau_i$. As we show later, because of the contrasting
  behaviors of the false alarm and detection probabilities with
  respect to all individual parameters, the decentralized detector can
  outperform the centralized detector. \QEDB
\end{remark}

We now state a result that provides a relation between the degrees of
freedom ($p_i$ and $p_c$) and the non-centrality parameters
($\lambda_i$ and $\lambda_c$) of the local and the centralized detectors. This result plays a central role in
comparing the performance of these centralized and decentralized
detectors.

\begin{lemma} {\it \textbf{(Degrees of freedom and non-centrality 
      parameters)}}\label{lma: parameters} Let $p_i$, $p_c$ and
  $\lambda_i$, $\lambda_c$ be the degrees of freedom and
  non-centrality parameters of the $i-$th local and centralized detectors,
  respectively. Then, $p_i\!\leq\!p_c$ and
  $\lambda_i\!\leq \!\lambda_c$ for all~$i \in \{1,\dots,N\}$.
\end{lemma}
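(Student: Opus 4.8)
The plan is to derive both inequalities from a single structural fact: the local processed measurement $\widetilde Y_i$ is a deterministic linear image of the centralized processed measurement $\widetilde Y_c$. Since the centralized measurement vector $Y_c$ in \eqref{eq: agree central} stacks the measurements of all subsystems, the local vector $Y_i$ is obtained by a $0/1$ selection matrix $S_i$, i.e., $Y_i = S_i Y_c$ as random vectors (they share the same noise realization). I would then show that there exists a matrix $L_i$ with $N_i S_i = L_i N_c$, so that $\widetilde Y_i = N_i Y_i = N_i S_i Y_c = L_i N_c Y_c = L_i \widetilde Y_c$, an identity that holds for every attack and noise realization. Since the rows of $N_c$ form a basis of the left null space of $\mathcal{O}_c$, the existence of such $L_i$ is equivalent to $N_i S_i \mathcal{O}_c = 0$, i.e., to the inclusion $\mathrm{Im}(S_i \mathcal{O}_c) \subseteq \mathrm{Im}\!\begin{bmatrix} \mathcal{O}_i & \mathcal{F}^{(u)}_i \end{bmatrix}$, because $N_i$ annihilates the column space of $\begin{bmatrix} \mathcal{O}_i & \mathcal{F}^{(u)}_i \end{bmatrix}$ by construction.

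Establishing this inclusion is the main obstacle, and it is where the interconnection forced-response matrix $\mathcal{F}^{(u)}_i$ is essential. The columns of $S_i \mathcal{O}_c$ are precisely the noise- and attack-free trajectories $y_i(1),\dots,y_i(T)$ generated by global initial states $x(0)$ under the full coupled dynamics $A$. Using subsystem $i$'s own recursion $x_i(k{+}1)=A_{ii}x_i(k)+B_i u_i(k)$ from \eqref{eq: subystem without attack}, any such trajectory can be written as $\mathcal{O}_i x_i(0) + \mathcal{F}^{(u)}_i U_i$, where $U_i$ is the realized interconnection sequence driven by the other subsystems' states. Hence every column of $S_i \mathcal{O}_c$ lies in $\mathrm{Im}\!\begin{bmatrix} \mathcal{O}_i & \mathcal{F}^{(u)}_i \end{bmatrix}$, which gives $N_i S_i \mathcal{O}_c = 0$ and the desired factorization $\widetilde Y_i = L_i \widetilde Y_c$. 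Intuitively, the local detector's view of the coupling, which it treats as an unknown nuisance to be projected out, is already reproducible from the global initial condition that the centralized detector removes.

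With $\widetilde Y_i = L_i \widetilde Y_c$ in hand, both claims follow by routine linear algebra. Taking mean and covariance and using Lemma \ref{thm: stat prop of processed} gives $\beta_i = L_i \beta_c$ and $\Sigma_i = L_i \Sigma_c L_i^\transpose$; recall also that $\lambda_i = \beta_i^\transpose \Sigma_i^{-1}\beta_i$ and $\lambda_c = \beta_c^\transpose \Sigma_c^{-1}\beta_c$. For the degrees of freedom, $p_i = \mathrm{Rank}(\Sigma_i) = \mathrm{Rank}(L_i \Sigma_c L_i^\transpose) \leq \mathrm{Rank}(\Sigma_c) = p_c$, since the rank of a product never exceeds that of any factor. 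For the non-centrality parameters I would use a data-processing inequality for the Mahalanobis norm: writing $\Sigma_c^{1/2}$ for the invertible symmetric square root, $a = \Sigma_c^{-1/2}\beta_c$, and $B = L_i \Sigma_c^{1/2}$, one has $\Sigma_i = BB^\transpose$ and
\[ \lambda_i = (L_i\beta_c)^\transpose \Sigma_i^{-1}(L_i\beta_c) = a^\transpose B^\transpose (BB^\transpose)^{-1} B\, a = a^\transpose \Pi\, a \leq a^\transpose a = \lambda_c, \]
where $\Pi = B^\transpose(BB^\transpose)^{-1}B$ is an orthogonal projector and $\lambda_c = \|a\|^2$. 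This argument uses that $\Sigma_c$ and $\Sigma_i$ are positive definite, which holds because each is a full-row-rank null-space basis conjugating the positive definite noise covariance in \eqref{eq: beta and sigma original}; this positive definiteness also justifies the inverses throughout. Together, the two displays establish $p_i \leq p_c$ and $\lambda_i \leq \lambda_c$ for all $i \in \{1,\dots,N\}$.
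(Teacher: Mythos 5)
Your proof is correct, and for the non-centrality half it takes a genuinely different route from the paper's. The enabling fact is shared: the noise- and attack-free response of subsystem $i$ under the coupled dynamics decomposes as $S_i\mathcal{O}_c x(0)=\mathcal{O}_i x_i(0)+\mathcal{F}^{(u)}_i U_i$ with $U_i$ linear in $x(0)$, hence $N_i S_i\mathcal{O}_c=0$; this is precisely Proposition A.1 in the paper's appendix, proved there by the same recursion you sketch. The proofs part ways in how they exploit this. The paper treats the two inequalities separately: $p_i\le p_c$ by comparing ranks of the null-space bases, and $\lambda_i \le \lambda_c$ by a three-step computation that permutes $Y_c$, partitions $(N_cQ)\Sigma(N_cQ)^\transpose$, and applies Schur complements twice to peel off positive-semidefinite remainders. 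You instead upgrade the containment to a single pathwise factorization $\widetilde Y_i = N_i S_i Y_c = L_i N_c Y_c = L_i \widetilde Y_c$, after which both claims are one-liners: $p_i = \mathrm{Rank}(L_i\Sigma_c L_i^\transpose) \le \mathrm{Rank}(\Sigma_c) = p_c$, and, with $a=\Sigma_c^{-1/2}\beta_c$ and $B=L_i\Sigma_c^{1/2}$, the matrix $B^\transpose(BB^\transpose)^{-1}B$ is an orthogonal projector, so $\lambda_i = a^\transpose B^\transpose (BB^\transpose)^{-1} B a \le a^\transpose a = \lambda_c$. Your route buys brevity, unification of the two cases under one construction, and a transparent statistical reading (the local statistic is built from a deterministic linear compression of the centralized processed data, so the claim is a data-processing inequality for the Mahalanobis norm); the paper's longer Schur-complement route is in essence a hand-expansion of the same projection inequality, its only extra yield being the explicit nonnegative slack matrices that separate $\lambda_c$ from $\lambda_1$. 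One hypothesis you rightly flag, and should keep, is invertibility of $\Sigma_i = L_i\Sigma_c L_i^\transpose$: it does not follow from the factorization alone (a product can drop rank) but from the direct formula $\Sigma_i = N_i \widetilde\Sigma_i N_i^\transpose$ with $N_i$ full row rank and $\widetilde\Sigma_i > 0$, which is exactly the justification you give, so your argument is complete as stated.
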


A proof of Lemma \ref{lma: parameters} is postponed to the Appendix.
In loose words, given the interpretation of the degrees of freedom and
noncentrality parameters in Remark \ref{rmk: sys theor interp}, Lemma
\eqref{lma: parameters} states that a centralized detector has more
knowledge about the system dynamics ($p_i \le p_c$) and its
measurements contain a stronger attack signature
($\lambda_i \le \lambda_c$) than any of the $i-$th local
detector. Despite these properties, we will show that the
decentralized detector can outperform the centralized one.

\section{Comparison of centralized and decentralized detection of
  attacks}\label{sec: CDA}
In this section we characterize the detection probabilities of the
decentralized and centralized detectors, and we derive sufficient
conditions for each detector to outperform the other. Recall that the
decentralized detector triggers an alarm if any of the local detectors
detects an alarm. In other words, 
\begin{align}\label{eq: PFD PDD definitions}
  \begin{split}
    P^F_d &= \mathrm{Pr}\left[ \Lambda_i \ge \tau_i, \text{ for some }
      i\in\{1,\ldots,N\} \left.\right\vert
      H_0\right],
    \\
    P^D_d &= \mathrm{Pr}\left[ \Lambda_i \ge \tau_i, \text{ for some }
      i\in\{1,\ldots,N\} \left.\right\vert
      H_1\right],
  \end{split}
\end{align}
where $P^F_d$ and $P^D_d$ denote the false alarm and detection
probabilities of the decentralized detector, respectively.

\begin{lemma} {\it \textbf{(Performance of the decentralized
      detector)}}\label{lma: PFD and PDD} The false alarm and
  detection probabilities in \eqref{eq: PFD PDD definitions} satisfy
  \begin{align} \label{eq: PFd PDd expressions}
    P^F_d=1-\prod\limits_{i=1}^{N}\left(1-P^F_i\right), \text{ and }
    P^D_d=1-\prod\limits_{i=1}^{N}\left(1-P^D_i\right).
  \end{align}	
\end{lemma}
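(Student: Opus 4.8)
The plan is to recognize that the event ``$\Lambda_i \ge \tau_i$ for some $i \in \{1,\ldots,N\}$'' is precisely the union $\bigcup_{i=1}^N \{\Lambda_i \ge \tau_i\}$, and to compute its probability by passing to the complement. By De Morgan's law, the complement of this union is the intersection $\bigcap_{i=1}^N \{\Lambda_i < \tau_i\}$, so that, conditioning on either hypothesis,
\[
P^F_d = 1 - \mathrm{Pr}\!\left[\bigcap_{i=1}^N \{\Lambda_i < \tau_i\} \,\Big|\, H_0\right], \quad P^D_d = 1 - \mathrm{Pr}\!\left[\bigcap_{i=1}^N \{\Lambda_i < \tau_i\} \,\Big|\, H_1\right].
\]
Thus the whole lemma reduces to showing that the probability of this intersection factors as a product over $i$, which in turn hinges on a suitable independence property of the local test statistics.

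The key step, and the main obstacle, is to establish that the statistics $\Lambda_1, \ldots, \Lambda_N$ are mutually independent, under both $H_0$ and $H_1$. I would argue this as follows. Each statistic $\Lambda_i = \widetilde{Y}_i^\transpose \Sigma_i^{-1} \widetilde{Y}_i$ is a measurable function of the processed measurement $\widetilde{Y}_i$ alone. From \eqref{eq: kernels}, $\widetilde{Y}_i = N_i[\mathcal{F}_i^{(a)} U_i^a + \mathcal{F}_i^{(w)} W_i + V_i]$ depends only on the noise vectors $W_i$ and $V_i$ of the $i$-th subsystem, the attack term $\mathcal{F}_i^{(a)} U_i^a$ being deterministic and shifting only the mean. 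By the modeling assumption in Section~\ref{sec: problem setup and preiliminary notions} that the process and measurement noise vectors are independent across distinct subsystems at all times, the pairs $\{(W_i, V_i)\}_{i=1}^N$ are mutually independent; hence the vectors $\widetilde{Y}_1, \ldots, \widetilde{Y}_N$ are mutually independent, and so are the scalar statistics $\Lambda_1, \ldots, \Lambda_N$. Crucially, this independence structure is unaffected by the hypothesis: switching from $H_0$ to $H_1$ alters only the deterministic mean of each $\widetilde{Y}_i$ through $U_i^a$, and not the joint dependence of the noise, so mutual independence holds verbatim under both $H_0$ and $H_1$.

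With independence in hand, the remaining steps are routine. Under $H_0$, the intersection probability factors as $\prod_{i=1}^N \mathrm{Pr}[\Lambda_i < \tau_i \mid H_0] = \prod_{i=1}^N (1 - P^F_i)$, using the definition $P^F_i = \mathrm{Pr}[\Lambda_i \ge \tau_i \mid H_0]$; substituting into the displayed identity yields $P^F_d = 1 - \prod_{i=1}^N (1 - P^F_i)$. The identical argument under $H_1$, now with $\mathrm{Pr}[\Lambda_i < \tau_i \mid H_1] = 1 - P^D_i$, gives $P^D_d = 1 - \prod_{i=1}^N (1 - P^D_i)$, which completes the proof.
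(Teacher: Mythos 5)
Your proposal is correct and follows essentially the same route as the paper's proof: pass to the complement via De Morgan's law, and establish mutual independence of the local test statistics from the fact that each $\widetilde{Y}_i$ in \eqref{eq: kernels} depends only on the subsystem's own noise $(W_i, V_i)$ (the interconnection signal having been annihilated by $N_i$ and the attack being a deterministic mean shift), so that the intersection probability factors under either hypothesis. The paper phrases this with events $\mathcal{E}_i$ rather than the statistics $\Lambda_i$, but the substance is identical.
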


A proof of Lemma \ref{lma: PFD and PDD} is postponed to the
Appendix. As shown in Fig. \ref{fig: false_alarm_behaviors}, for the case when $P^F_i=P^F_j$, for all $i,j\in \{1,\ldots,N\}$, $P^F_d$
increases with increase in $P^F_i$ and
$N$. To allow for a fair comparison between the decentralized and
centralized detectors, we assume that $P_c^F = P^F_d$. Consequently,
for a fixed false alarm probability $P_c^F$, the probabilities $P_i^F$
satisfy
\begin{align*}
  P_c^F  = 1-\prod\limits_{i=1}^{N}\left(1-P^F_i\right) .
\end{align*}

We now derive a sufficient condition for the centralized
detector to outperform the decentralized detector.

\begin{figure}[t]
	\centering
	\includegraphics[width=1.0\linewidth]{./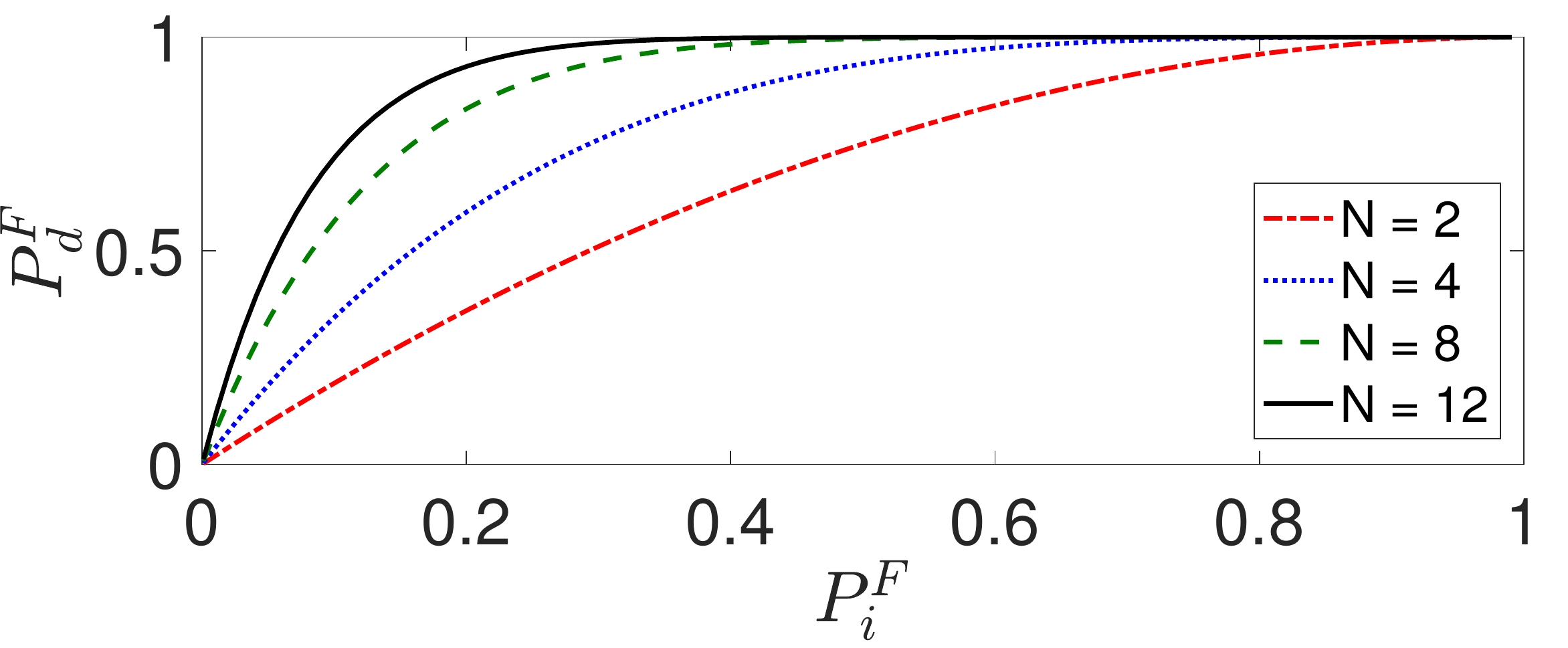}
	\caption {This figure shows the false alarm probability of the
          decentralized detector, $P_d^F$, as a function of the identical false
          alarm probabilities of the local detectors, $P_i^F$, for
          different numbers of local detectors.}
	\label{fig: false_alarm_behaviors}
\end{figure}

\begin{theorem}{\it \textbf{(Sufficient condition for
      $P^D_c\geq P^D_d$)}}\label{thm: Pc greater than Pd}
  Let $P^F_c = P_d^F$, and assume that the following condition is satisfied:
  \begin{align}\label{eq: PDc greater than PDd}
    \tau_c \le p_c + \lambda_c - \sqrt{4N (p_c + 2\lambda_c)
    \ln\left( \frac{1}{1-P^D_\text{max}}\right) },
  \end{align}
  where $P^D_\text{max} = \max \{ P_1^D, \dots, P_N^D\}$. Then,
  $P^D_c\geq P^D_d$.
\end{theorem}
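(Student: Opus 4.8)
The plan is to combine a one-sided concentration inequality for the noncentral chi-squared distribution with the elementary bound $P^D_i \le P^D_{\max}$. First I would record the two quantities in a common form. By Lemma \ref{lma: PF and PD via Q-functions}, $P^D_c = Q(\tau_c; p_c, \lambda_c) = \mathrm{Pr}[Y_c \ge \tau_c]$ with $Y_c \sim \chi^2(p_c, \lambda_c)$, whose mean is $p_c + \lambda_c$ and whose deviation scale is governed by $p_c + 2\lambda_c$. On the decentralized side, Lemma \ref{lma: PFD and PDD} gives $P^D_d = 1 - \prod_{i=1}^N (1 - P^D_i)$. The strategy is to show both $P^D_c$ and $P^D_d$ can be compared to the common intermediate quantity $1 - (1 - P^D_{\max})^N$.

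The main step is a lower bound on $P^D_c$. I would invoke the Laurent--Massart lower-tail inequality for noncentral chi-squared variables, namely $\mathrm{Pr}[Y_c \le (p_c + \lambda_c) - 2\sqrt{(p_c + 2\lambda_c)\,x}\,] \le e^{-x}$ for every $x \ge 0$. Setting $x = N \ln(1/(1 - P^D_{\max}))$ turns the deviation $2\sqrt{(p_c + 2\lambda_c)\,x}$ into exactly $\sqrt{4N(p_c + 2\lambda_c)\ln(1/(1 - P^D_{\max}))}$, the quantity subtracted in hypothesis \eqref{eq: PDc greater than PDd}, and turns $e^{-x}$ into $(1 - P^D_{\max})^N$. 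Because $Q(\cdot\,; p_c, \lambda_c)$ is decreasing in its first argument (Remark \ref{rmk: sys theor interp}), the inequality in \eqref{eq: PDc greater than PDd} lets me evaluate the tail bound at the right-hand threshold and conclude $P^D_c \ge 1 - (1 - P^D_{\max})^N$.

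The remaining step is an upper bound on $P^D_d$, and is purely algebraic. Since $P^D_i \le P^D_{\max}$ for each $i$, I have $1 - P^D_i \ge 1 - P^D_{\max}$, so $\prod_{i=1}^N (1 - P^D_i) \ge (1 - P^D_{\max})^N$, which yields $P^D_d \le 1 - (1 - P^D_{\max})^N$. Chaining the two bounds gives $P^D_c \ge 1 - (1 - P^D_{\max})^N \ge P^D_d$, as claimed. The hypothesis $P^F_c = P^F_d$ plays no role in the derivation itself; it only guarantees that the comparison of detection probabilities is carried out at a common false-alarm level, which is what makes the conclusion meaningful.

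I expect the main obstacle to be justifying the precise form of the chi-squared lower-tail bound in the noncentral regime. One must verify that the Laurent--Massart estimate holds with the scale parameter $p_c + 2\lambda_c$ rather than merely $p_c$, and confirm that the substitution of $x$ reproduces the right-hand side of \eqref{eq: PDc greater than PDd} exactly. A secondary care point is the monotonicity argument that converts the ``$\le$'' in the hypothesis into the desired lower bound on $P^D_c$, which relies on the decreasing behavior of $Q$ recorded in Remark \ref{rmk: sys theor interp}; everything else is a short sequence of elementary inequalities.
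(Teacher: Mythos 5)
Your proposal is correct and takes essentially the same route as the paper's proof: the paper likewise applies the lower-tail bound $\mathrm{Pr}\left[Y \le \mu - \sigma\sqrt{2x}\right] \le e^{-x}$ for $Y \sim \chi^2(p_c,\lambda_c)$ with $\sigma = \sqrt{2(p_c+2\lambda_c)}$ and $x = N\ln\left(1/(1-P^D_\text{max})\right)$, and then chains it with $\prod_{i=1}^N (1-P^D_i) \ge (1-P^D_\text{max})^N$ to conclude $P^D_c \ge P^D_d$. The noncentral-regime form of the concentration inequality you flagged as the main care point is exactly the paper's appendix lemma on exponential tail bounds for $\chi^2(p,\lambda)$ (cited to Birg\'e), with scale parameter $p_c + 2\lambda_c$ as you require, so no gap remains.
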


A proof of Theorem \ref{thm: Pc greater than Pd} is postponed to the
Appendix. We next derive a sufficient condition for the decentralized
detector to outperform the centralized detector.

\begin{theorem}{\it \textbf{(Sufficient condition for
      $P^D_d\geq P^D_c$)}}\label{thm: Pd greater than Pc}
  Let $P^F_c = P_d^F$, and assume that the following condition is satisfied:
  \begin{align}\label{eq: PDd greater than PDc}
  \begin{split}
    \tau_c & \geq p_c+\lambda_c+\sqrt{4\left(p_c+2\lambda_c\right) \ln
     \left(\frac{1}{1-(1-P^D_\text{min})^N}\right)} + \\
    &\quad\quad+2\ln\left(\frac{1}{1-(1-P^D_\text{min})^N} \right) ,
  \end{split}
  \end{align}
 where $P^D_\text{min} = \min \{ P_1^D, \dots, P_N^D\}$. Then $P^D_d\geq P^D_c$.
\end{theorem}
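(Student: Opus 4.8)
The plan is to sandwich the two detection probabilities by the quantity $1-(1-P^D_\text{min})^N$, that is, to prove $P^D_c \le 1-(1-P^D_\text{min})^N \le P^D_d$. The second inequality is elementary: since $P^D_i \ge P^D_\text{min}$ for every $i$, we have $1-P^D_i \le 1-P^D_\text{min}$, hence $\prod_{i=1}^N(1-P^D_i) \le (1-P^D_\text{min})^N$; substituting into the expression for $P^D_d$ from Lemma~\ref{lma: PFD and PDD} gives $P^D_d = 1-\prod_{i=1}^N(1-P^D_i) \ge 1-(1-P^D_\text{min})^N$. The whole argument therefore reduces to establishing the upper bound $P^D_c \le 1-(1-P^D_\text{min})^N$.

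For that upper bound, recall from Lemma~\ref{lma: PF and PD via Q-functions} that under $H_1$ the centralized statistic satisfies $\Lambda_c \sim \chi^2(p_c,\lambda_c)$, so $P^D_c = \mathrm{Pr}[\Lambda_c \ge \tau_c \mid H_1]$ is the tail probability of a noncentral chi-squared variable with mean $p_c+\lambda_c$ and variance $2(p_c+2\lambda_c)$. I would control this tail by a concentration inequality of Birgé/Laurent--Massart type, which states that for $X \sim \chi^2(p,\lambda)$ and any $t \ge 0$,
\begin{align*}
  \mathrm{Pr}\!\left[ X \ge p+\lambda + 2\sqrt{(p+2\lambda)\,t} + 2t \right] \le e^{-t}.
\end{align*}
The noncentrality is precisely why the deviation term is $\sqrt{(p_c+2\lambda_c)t}$ rather than $\sqrt{p_c\,t}$; obtaining or accurately citing this noncentral version is the main technical ingredient, and I expect it to be the delicate step, since the central Laurent--Massart inequality does not immediately carry the $\lambda$-dependence.

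To conclude, set $t = \ln\!\big(1/(1-(1-P^D_\text{min})^N)\big)$, which is nonnegative whenever $0 < P^D_\text{min} \le 1$. With this choice, the two deviation terms $2\sqrt{(p_c+2\lambda_c)t} = \sqrt{4(p_c+2\lambda_c)t}$ and $2t$ match exactly the two terms appearing in hypothesis \eqref{eq: PDd greater than PDc}, so the assumed threshold condition reads $\tau_c \ge p_c+\lambda_c + 2\sqrt{(p_c+2\lambda_c)t} + 2t$. Hence the event $\{\Lambda_c \ge \tau_c\}$ is contained in $\{\Lambda_c \ge p_c+\lambda_c+2\sqrt{(p_c+2\lambda_c)t}+2t\}$, and the concentration inequality yields $P^D_c \le e^{-t} = 1-(1-P^D_\text{min})^N$. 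Chaining this with the lower bound on $P^D_d$ established above gives $P^D_c \le 1-(1-P^D_\text{min})^N \le P^D_d$, as claimed. The standing assumption $P^F_c = P^F_d$ is the fairness condition that renders the comparison of detectors meaningful, while the explicit threshold bound \eqref{eq: PDd greater than PDc} is what the above inequalities actually exploit.
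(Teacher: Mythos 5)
Your proposal is correct and takes essentially the same route as the paper's own proof: the paper likewise invokes the Birg\'e-type upper-tail bound for the noncentral $\chi^2(p_c,\lambda_c)$ statistic (stated as an appendix lemma, inequality \eqref{eq: tail bound one}) with the same choice $x=\ln\left(1/(1-(1-P^D_\text{min})^N)\right)$, and compares against $P^D_d$ through the product formula of Lemma \ref{lma: PFD and PDD} together with $P^D_i\geq P^D_\text{min}$. The only difference is presentational—you sandwich both probabilities around $1-(1-P^D_\text{min})^N$, while the paper writes the equivalent complementary chain $\mathrm{Pr}[\Lambda_c\leq\tau_c|H_1]\geq(1-P^D_\text{min})^N\geq\prod_{i=1}^N(1-P^D_i)=1-P^D_d$.
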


\smallskip A proof of Theorem \ref{thm: Pc greater than Pd} is
postponed to the Appendix. Theorems \ref{thm: Pc greater than Pd} and
\ref{thm: Pd greater than Pc} provide sufficient conditions on the
detectors and attack parameters that result in one detector outperforming the other. In particular, from
\eqref{eq: PDc greater than PDd} and \eqref{eq: PDd greater than PDc}
we note that, depending on decision threshold $\tau_c$, a centralized
detector may or may not outperform a decentralized detector. This is
intuitive as the $Q$ function, which quantifies the detection
probability, is a decreasing function of the detection threshold (see
Remark \ref{rmk: sys theor interp}).  To clarify the effect of attack
and detection parameters on the performance trade-offs of the
detectors, we now express \eqref{eq: PDc greater than PDd} and
\eqref{eq: PDd greater than PDc} using the mean and standard deviation
of the test statistic $\Lambda_c$ in \eqref{eq:
  test_stat_central}. Let
\begin{align*}
\begin{split}
  \mu_c&\triangleq \mathbb{E}\left[\Lambda_c \right] = \lambda_c+p_c,
  \text{ and }\\
  \sigma_c& \triangleq \text{SD}[\Lambda_c] =
  \sqrt{2(p_c+2\lambda_c)}.
\end{split} 
\end{align*}
where the expectation and standard deviation (SD) of $\Lambda_c$ follows from the fact that
under $H_1$, $\Lambda_c \sim \chi^2(p_c,\lambda_c)$ (see proof of Lemma \ref{lma: PF and PD via Q-functions}).
Hence, \eqref{eq: PDc greater than PDd} and \eqref{eq: PDd greater
  than PDc} can be rewritten, respectively, as
\begin{subequations}
\begin{align}
  \tau_c&\leq \mu_c-\sigma_c \underbrace{\sqrt{2
          N\ln\left( \frac{1}{1-P^D_\text{max}}\right) }}_{\triangleq \kappa_c} ,\text{ and } \label{eq: modified sufficient condition 1}\\
  \tau_c&\geq \mu_c+\sigma_c \underbrace{\sqrt{2
          \ln\left(
          \frac{1}{1-(1-P^D_\text{min})^N}\right)}}_{\triangleq
          \kappa_d}+\kappa_d^2\label{eq: modified sufficient condition
          2} .
\end{align}
\end{subequations}
From \eqref{eq: modified sufficient condition 1} and \eqref{eq:
  modified sufficient condition 2} we note that a centralized detector
outperforms the decentralized one if $\tau_c$ is $\kappa_c$ standard
deviations smaller than the mean $\mu_c$. Instead, a
decentralized detector outperforms the centralized detector if
$\tau_c$ is at least $\kappa_d$ standard deviations larger than the mean $\mu_c$. See Fig. \ref{fig: pdf of central
  detector} for a graphical illustration of this interpretation.
\begin{figure}[t]
	\centering
	\includegraphics[width=1.0\linewidth]{./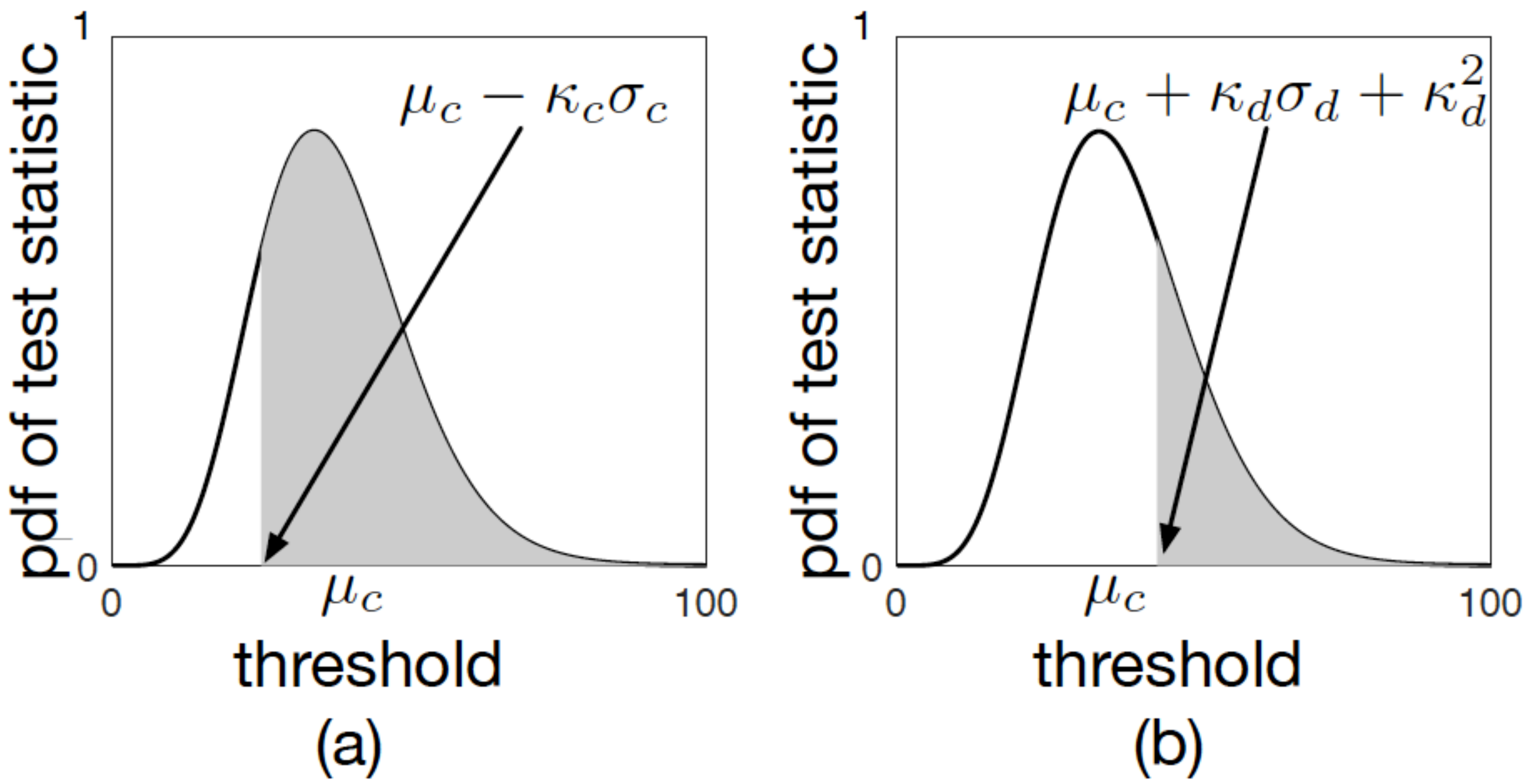}
	\caption {This figure shows the probability density function (pdf)
		of $\Lambda_c$ under $H_1$, as a function of threshold $\tau_c$. For $\tau_c=\mu_c-\kappa_c\sigma_c$ and $\tau_c=\mu_c+\kappa_d\sigma_d+\sigma_d^2$, the shaded
		area in panels (a) and (b)  indicates the detection probability of the centralized detector. As seen in panels (a) and (b), an increase in $\kappa_c$ results in larger area (larger detection probability) while a increase in $\kappa_d$ results in smaller area (smaller detection probability).}
	\label{fig: pdf of central detector}
\end{figure}

Theorems \ref{thm: Pc greater than Pd} and \ref{thm: Pd greater than
  Pc} are illustrated in Fig. \ref{fig: non-centrality-regions} as a
function of the non-centrality parameters. It can be observed that (i)
each of the detectors can outperform the other depending on the values of the noncentrality parameter values, (ii) the provided
bounds qualitatively capture the actual performance of the centralized
and decentralized detectors as the non-centrality parameters increase,
and (iii) the provided bounds are rather tight over a large range of
non-centrality parameters. In Fig.~\ref{fig: PDC minus PDD} we show
that the difference of the detection probabilities of the centralized
and decentralized detectors can be large, especially when the
non-centrality parameters are small and satisfy
$\lambda_c \approx \lambda_i$, as evident in panel (a) of Fig.~\ref{fig: PDC minus PDD} .

\begin{figure}[t]
	\centering
	\includegraphics[width=1.0\linewidth]{./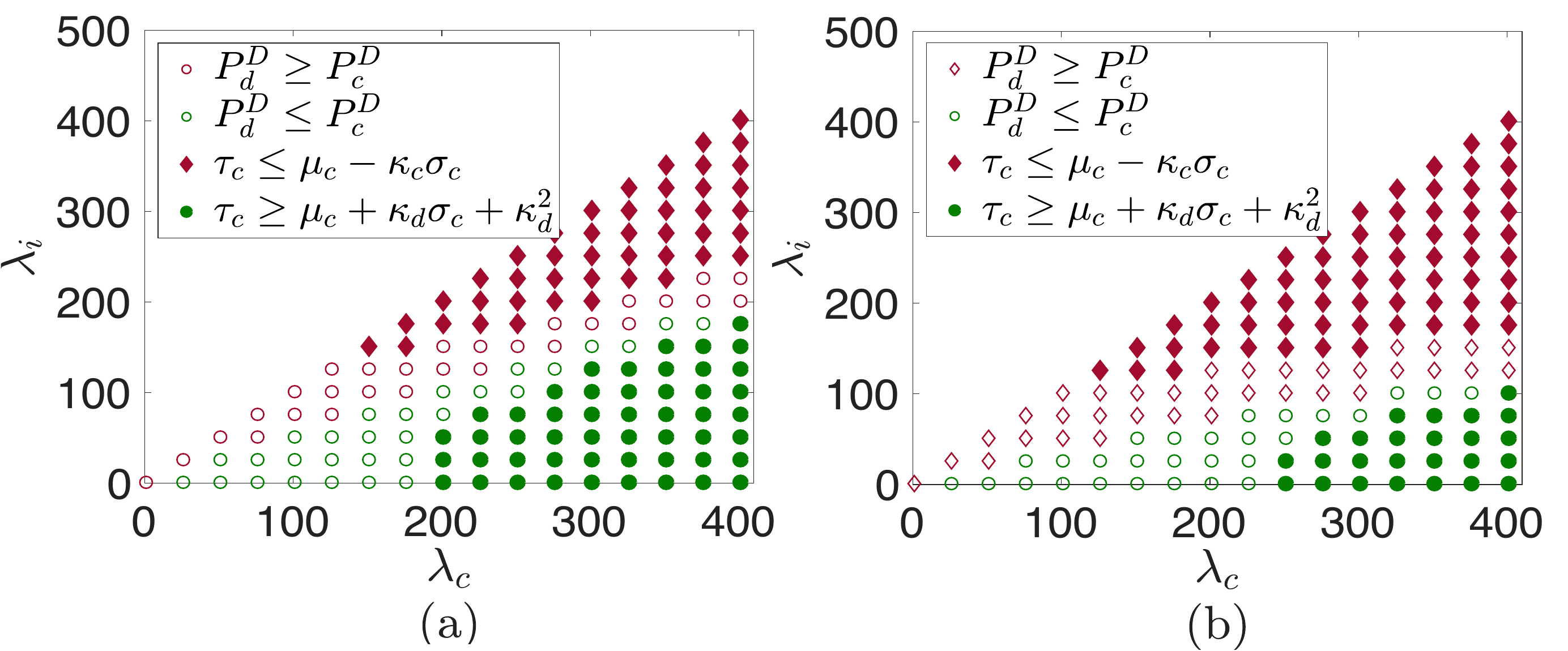}
	\caption{This figure shows when the decentralized, which comprises identical local detectors, and
          centralized detectors outperform their counterpart, as a
          function of the non-centrality parameters. The regions
          identified by solid markers correspond to the conditions in
          Theorems \ref{thm: Pc greater than Pd} and \ref{thm: Pd
            greater than Pc}. Instead, regions identified by empty
          markers are identified numerically. Since
          $\lambda_i \leq \lambda_c$, the white region (top left) is
          not admissible. For a fixed $P^F_c =P^F_d =0.01$, (a)
          corresponds to the case of $N=2$ and (b) corresponds to
          the case of $N=4$. When $N=4$, the decentralized detector outperforms the centralized one for a larger set of noncentrality parameters.}
	\label{fig: non-centrality-regions}
\end{figure}

\begin{figure}
	\centering
	\includegraphics[width=0.83\linewidth]{./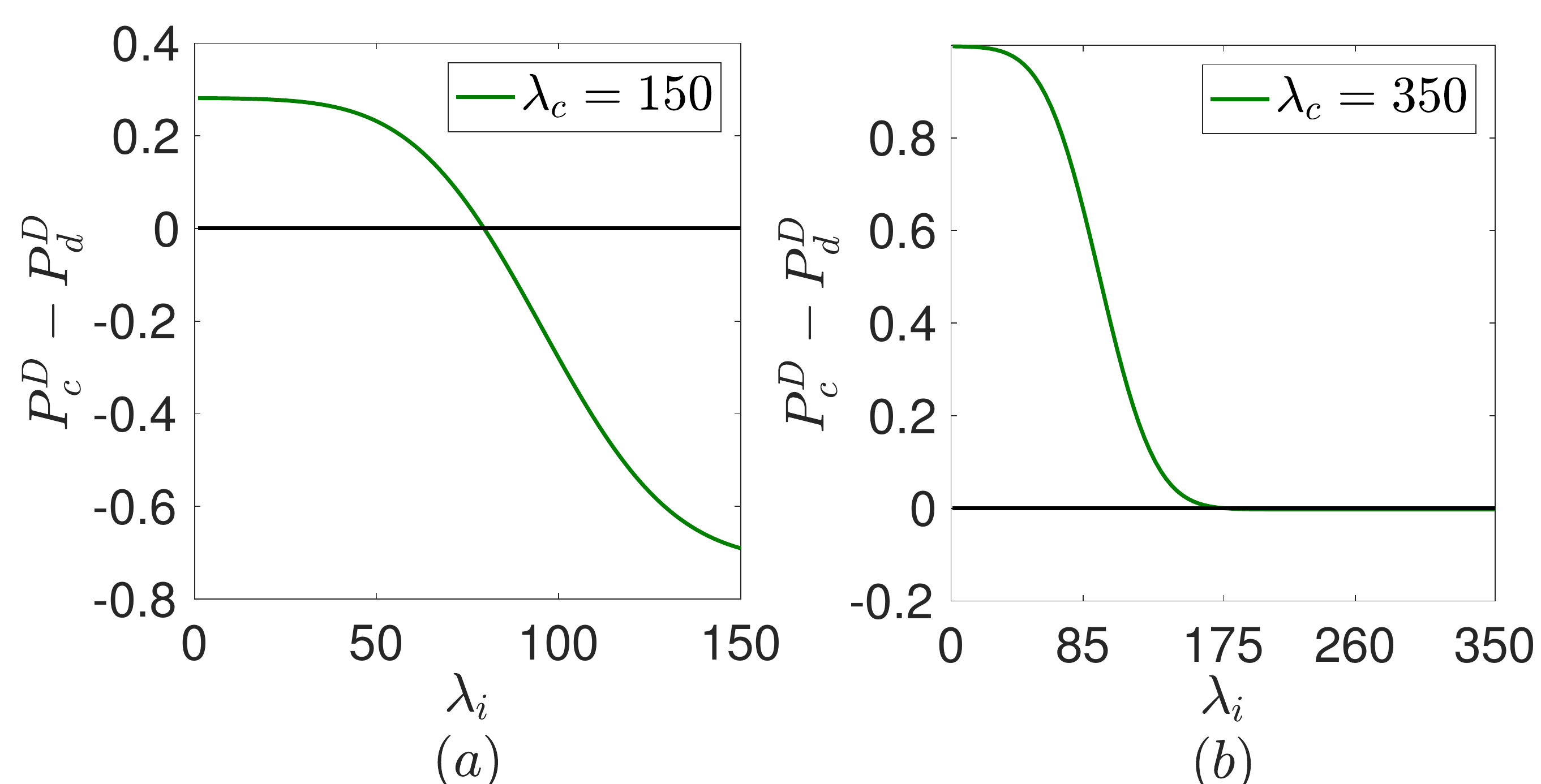}
	\caption{This figure shows the difference of the centralized
          and decentralized detection probabilities as a function of
          $\lambda_i$ for different values of $\lambda_c$. For small
          values of $\lambda_c$, the detection probability of the
          decentralized detector can be substantially larger than its
          centralized counterpart.}
	\label{fig: PDC minus PDD}
\end{figure}

\section{Design of optimal attacks}\label{sec: DOOA}
In this section we consider the problem of designing attacks that
deteriorate the performance of the interconnected system~\eqref{eq:
  subystem without attack} while remaining undetected from the
centralized and decentralized detectors. We measure the degradation
induced by an attack with the expected value of the deviation of the
state trajectory from the origin. We assume that the attack is a
deterministic signal, and thus independent of the noise affecting the
system dynamics and measurements. In particular, for a fixed value of
the probability $P_c^F$ and a threshold $P_c^F \le \delta_c \le 1$, we
consider the optimization problem
\begin{align*}
  \text{{(P.1)}}\quad& \underset{U^a }{\text{max}}
  & & \mathbb{E}\left[ \sum_{k=1}^{T}x(k)^\transpose x(k)\right], \\
                         & \text{subject to}
  & & P^D_c \leq \delta_c, \\
                         &&& x(k+1)=A x(k) + B^a u^a(k) + w(k), 
\end{align*}
where $U^a$ is the deterministic attack input over time horizon $T$ (see \eqref{eq: expanded measurements central}). Notice that, because the attack is deterministic, the objective
function in (P.1) can be simplified by bringing the expectation inside
the summation, and replacing the state equation constraint with the
mean state response. Further, because the system parameters and
$P_c^F$ are fixed, $\tau_c$ and $p_c$ are also fixed, which ensures that $P^D_d$ only depends on noncentrality parameter. This observation along with the fact that $Q(\cdot)$ is increasing function in noncentrality parameter (see Remark \ref{rmk: sys theor interp}) allows us to express the 
detection constraint in terms of $\lambda_c$. Specifically, the optimization problem (P.1)
can be rewritten~as
\begin{align*}
  \text{{(P.2)}}\quad& \underset{U^a}{\text{max}}
  & & \sum_{k=1}^{T}\overline{x}(k)^\transpose\overline{x}(k) \\
                     & \text{subject to}
  & & (U^a)^\transpose  M_c (U^a) \leq \widetilde{\delta}_c,\\
                     &&&\overline x(k+1)=A \overline x(k) + B^a
                         u^a(k) ,
\end{align*}
where we have used that $\mathrm{Cov} \left[x(k)\right]$ is
independent of the attack $u^a(k)$, and 
\begin{align*}
  \mathbb{E}[x(k)^\transpose x(k)] &= \overline{x}(k)^\transpose
                                     \overline{x}(k) + \mathrm{Trace}
                                     \left(\mathrm{Cov} \left[x(k)\right]\right),
\end{align*}
with $\overline{x}(k)=\mathbb{E}[x(k)]$. Further, we have
$\widetilde{\delta}_c=Q^{-1}_{p_c,\tau_c}(\delta_c)$, where
$Q^{-1}_{p_c,\tau_c}(\alpha): [0,1] \to [0,\infty]$ denotes the
inverse of $Q(\tau_c; p_c,\lambda_c)$ for fixed $p_c$ and $\tau_c$,
and $\lambda_c = (U^a)^\transpose M_c (U^a)$, with $M_c$ as in
\eqref{eq: Mi and Mc}. It should be noticed that the attack constraint
in (P.2) essentially limits the (weighted) energy of the attack
signal. We next characterize the solution to the optimization problem
(P.2).


\begin{theorem}{\bf {\it \textbf{(Optimal attack vectors)}}}\label{thm:
    central attack performance} Let $U^*_c$ be any solution of (P.2). Then, there exist a $\gamma_c>0$ such that the pair $(U^*_c, \gamma_c)$ solves the following optimality equations: 
\begin{subequations}
	\begin{align}
	\left[ \mathcal{B}_a^\transpose\mathcal{B}_a-\gamma_c M_c\right]U^*_c+\mathcal{B}_a^\transpose\mathcal{A}x(0)&=0, \label{eq: optimal necessary one}\\
	\left( U^*_c\right)^\transpose M_c (U^*_c)&=\widetilde \delta_c, \label{eq: optimal necessary two} 
	\end{align}
\end{subequations}
where
  \begin{align}\label{eq: mcal A and B}
\mathcal{A} = 
\begin{bmatrix}
A\\
\vdots \\
A^T
\end{bmatrix} \text{ and } \;
\mathcal{B}_a &= 
\begin{bmatrix}
B^a & \cdots  & 0\\
\vdots & \ddots & \vdots \\
A^{T-1}B^a & \ldots & B^a
\end{bmatrix}. 
\end{align}
\end{theorem}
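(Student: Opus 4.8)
The plan is to recast (P.2) as a finite-dimensional quadratically constrained quadratic program (QCQP) in the stacked attack vector $U^a$, and then extract the stated optimality equations as the first-order (KKT) necessary conditions, using convexity of the objective to fix both the activeness of the constraint and the strict sign of the multiplier. First I would propagate the mean dynamics $\overline{x}(k+1)=A\overline{x}(k)+B^a u^a(k)$ from the deterministic initial condition $\overline{x}(0)=x(0)$, obtaining $\overline{x}(k)=A^k x(0)+\sum_{j=0}^{k-1}A^{k-1-j}B^a u^a(j)$. Stacking $\overline{x}(1),\dots,\overline{x}(T)$ and collecting the inputs into $U^a$ exactly yields $\overline{X}=\mathcal{A}x(0)+\mathcal{B}_a U^a$, with $\mathcal{A}$ and $\mathcal{B}_a$ as in \eqref{eq: mcal A and B}. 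The cost in (P.2) then becomes $\overline{X}^\transpose\overline{X}=\|\mathcal{A}x(0)+\mathcal{B}_a U^a\|^2$, so (P.2) reads $\max_{U^a}\|\mathcal{A}x(0)+\mathcal{B}_a U^a\|^2$ subject to $(U^a)^\transpose M_c U^a\le\widetilde{\delta}_c$.

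Next I would form the Lagrangian $L(U^a,\gamma)=\|\mathcal{A}x(0)+\mathcal{B}_a U^a\|^2-\gamma\left[(U^a)^\transpose M_c U^a-\widetilde{\delta}_c\right]$ and write the KKT conditions at the maximizer $U^*_c$. Since $\widetilde{\delta}_c>0$ (as $\delta_c<1$), at any point where the constraint is active one has $M_c U^*_c\neq 0$, so the single constraint gradient $2M_c U^*_c$ is nonzero and the linear independence constraint qualification holds; hence a multiplier $\gamma_c$ exists. Setting $\nabla_{U^a}L=0$ and dividing by two gives precisely $\left[\mathcal{B}_a^\transpose\mathcal{B}_a-\gamma_c M_c\right]U^*_c+\mathcal{B}_a^\transpose\mathcal{A}x(0)=0$, i.e.\ \eqref{eq: optimal necessary one}.

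It remains to justify that the constraint is active and that $\gamma_c>0$. For activeness I would invoke convexity: the objective is a convex quadratic in $U^a$ (its Hessian $2\mathcal{B}_a^\transpose\mathcal{B}_a\ge 0$) and the feasible set is convex, and the maximum of a convex function over a convex set is attained on its boundary; hence $(U^*_c)^\transpose M_c U^*_c=\widetilde{\delta}_c$, which is \eqref{eq: optimal necessary two}. For strict positivity I would argue by contradiction: if $\gamma_c=0$, then \eqref{eq: optimal necessary one} collapses to $\mathcal{B}_a^\transpose(\mathcal{A}x(0)+\mathcal{B}_a U^*_c)=0$, i.e.\ the gradient of the convex cost vanishes at $U^*_c$, so $U^*_c$ is a global minimizer of the cost, contradicting its maximality whenever the attack genuinely influences the state (i.e.\ $\mathcal{B}_a\neq 0$). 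Therefore $\gamma_c>0$.

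The main obstacle is that (P.2) is a nonconvex program, namely a convex quadratic maximized over a convex set, so standard convex duality does not apply directly and the two defining equations must be argued independently: the equality \eqref{eq: optimal necessary two} from the principle that a convex maximum lies on the boundary, and $\gamma_c>0$ from ruling out the degenerate coincidence of the constrained maximizer with the unconstrained minimizer. A secondary point deserving a remark is that $M_c=(N_c\mathcal{F}^{(a)}_c)^\transpose\Sigma_c^{-1}(N_c\mathcal{F}^{(a)}_c)$ is in general only positive semidefinite, so the feasible set may be unbounded; however, since the statement assumes a maximizer $U^*_c$ exists, the KKT derivation above applies verbatim and no separate existence argument is needed.
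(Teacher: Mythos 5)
Your proof is correct and follows essentially the same route as the paper: stack the mean dynamics into $\overline{X}=\mathcal{A}x(0)+\mathcal{B}_a U^a$, pose (P.2) as a quadratically constrained quadratic program, and obtain \eqref{eq: optimal necessary one}--\eqref{eq: optimal necessary two} from the KKT conditions, using convexity of the objective to force the constraint to be active. If anything, your treatment is slightly more careful than the paper's: you verify the constraint qualification (the gradient $2M_cU^*_c\neq 0$ at an active point) and explicitly rule out $\gamma_c=0$ when the constraint is active, a point the paper's Case (i)/(ii) analysis leaves implicit.
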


\smallskip 

A proof of Theorem \ref{thm: central attack performance} is postponed
to the Appendix. Theorem \ref{thm: central attack performance} not
only guarantees the existence of optimal attacks, but it also provides
us with necessary conditions to verify if an attack is (locally)
optimal. When the system initial state is zero, we can also quantify
the performance degradation induced by an optimal attack. Let
$\rho_\text{max} (A,B)$ and $\nu_\text{max} (A,B)$ denote a largest
generalized eigenvalue of a matrix pair $(A,B)$ and one of its
associated generalized eigenvectors \cite{DK:05}.

\begin{lemma}{\bf {\it \textbf{(System degradation with zero initial
        state)}}}\label{lma: central attack performance zero initial state} Let
  $x(0)=0$. Then, the optimal solution to (P.2) is
  \begin{align}
    U^*_c&=\left( \sqrt{\frac{\widetilde \delta_c}{(\nu^*)^\transpose M_c(\nu^*)}}\right) \nu^*,
  \end{align}
  and its associated optimal cost is
  \begin{align}\label{eq: optimal cost central}
    J_c^* &=\widetilde \delta_c\,\rho_\text{max}\left( \mathcal{B}_a^\transpose\mathcal{B}_a, M_c\right),
  \end{align}
 where $\nu^*=\nu_\text{max}\left( \mathcal{B}_a^\transpose\mathcal{B}_a, M_c\right)$. 
\end{lemma}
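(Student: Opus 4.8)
The plan is to use the zero initial state to collapse (P.2) into a quadratically constrained quadratic maximization and then recognize it as a generalized Rayleigh-quotient problem. First I would stack the mean states as $\overline X = [\overline x(1)^\transpose \; \cdots \; \overline x(T)^\transpose]^\transpose$; iterating $\overline x(k+1)=A\overline x(k)+B^a u^a(k)$ from $\overline x(0)=x(0)=0$ yields $\overline x(k)=\sum_{j=0}^{k-1}A^{k-1-j}B^a u^a(j)$, which in aggregate reads $\overline X=\mathcal{A}x(0)+\mathcal{B}_a U^a=\mathcal{B}_a U^a$ with $\mathcal{A},\mathcal{B}_a$ as in \eqref{eq: mcal A and B}. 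Hence the objective becomes $\sum_{k=1}^T \overline x(k)^\transpose \overline x(k)=\overline X^\transpose \overline X=(U^a)^\transpose \mathcal{B}_a^\transpose \mathcal{B}_a U^a$, so (P.2) is exactly the maximization of $(U^a)^\transpose \mathcal{B}_a^\transpose \mathcal{B}_a U^a$ subject to $(U^a)^\transpose M_c U^a \le \widetilde\delta_c$.

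Next I would show the constraint is active at any optimizer. Both the objective and the constraint function are homogeneous of degree two in $U^a$; assuming $M_c>0$ (so the feasible set is a bounded ellipsoid), any feasible $U^a\neq 0$ can be rescaled by $c=\sqrt{\widetilde\delta_c/((U^a)^\transpose M_c U^a)}\ge 1$ to make the constraint tight while scaling the objective up by $c^2\ge 1$. It therefore suffices to maximize the ratio $(U^a)^\transpose \mathcal{B}_a^\transpose \mathcal{B}_a U^a/(U^a)^\transpose M_c U^a$, whose maximum over $U^a\neq 0$ is, by the Courant--Fischer characterization of the symmetric positive-definite pencil $(\mathcal{B}_a^\transpose \mathcal{B}_a, M_c)$, the largest generalized eigenvalue $\rho_\text{max}=\rho_\text{max}(\mathcal{B}_a^\transpose \mathcal{B}_a, M_c)$, attained at the associated generalized eigenvector $\nu^*=\nu_\text{max}(\mathcal{B}_a^\transpose \mathcal{B}_a, M_c)$.

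To recover the stated optimizer and cost, I would take $U^*_c$ to be the rescaling of $\nu^*$ that makes the constraint tight, i.e. $U^*_c=c\,\nu^*$ with $c=\sqrt{\widetilde\delta_c/((\nu^*)^\transpose M_c\nu^*)}$, which is precisely the claimed expression. For the cost I would use the defining relation $\mathcal{B}_a^\transpose \mathcal{B}_a\nu^*=\rho_\text{max}\,M_c\nu^*$ to obtain $(\nu^*)^\transpose \mathcal{B}_a^\transpose \mathcal{B}_a\nu^*=\rho_\text{max}(\nu^*)^\transpose M_c\nu^*$, whence $J_c^*=(U^*_c)^\transpose \mathcal{B}_a^\transpose \mathcal{B}_a U^*_c=c^2\rho_\text{max}(\nu^*)^\transpose M_c\nu^*=\widetilde\delta_c\,\rho_\text{max}$, as claimed. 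I would cross-check this against Theorem \ref{thm: central attack performance}: with $x(0)=0$, the optimality condition \eqref{eq: optimal necessary one} collapses to $[\mathcal{B}_a^\transpose \mathcal{B}_a-\gamma_c M_c]U^*_c=0$, so $U^*_c$ is a generalized eigenvector with eigenvalue $\gamma_c$, and since \eqref{eq: optimal necessary two} gives the cost $(U^*_c)^\transpose \mathcal{B}_a^\transpose \mathcal{B}_a U^*_c=\gamma_c(U^*_c)^\transpose M_c U^*_c=\gamma_c\widetilde\delta_c$, maximizing the cost forces $\gamma_c=\rho_\text{max}$.

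The main obstacle is the well-posedness of the problem when $M_c$ is only positive semidefinite. Since $\mathcal{F}^{(a)}_c=(I_T\otimes C)\mathcal{B}_a$, one has $M_c=(N_c(I_T\otimes C)\mathcal{B}_a)^\transpose\Sigma_c^{-1}(N_c(I_T\otimes C)\mathcal{B}_a)$ and hence $\mathrm{Null}(\mathcal{B}_a)\subseteq\mathrm{Null}(M_c)$, which is the ``wrong'' inclusion: if $\mathrm{Null}(M_c)$ strictly contains $\mathrm{Null}(\mathcal{B}_a)$, then along a direction in $\mathrm{Null}(M_c)\setminus\mathrm{Null}(\mathcal{B}_a)$ the constraint is vacuous while the objective grows without bound, so (P.2) is unbounded and no finite $\rho_\text{max}$ exists. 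The clean statement therefore needs $M_c>0$ (equivalently, $N_c(I_T\otimes C)\mathcal{B}_a$ of full column rank), which I would state explicitly as a nondegeneracy hypothesis; under it the pencil is symmetric positive-definite and every step above is rigorous.
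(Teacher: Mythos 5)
Your proof is correct, and it takes a somewhat different and more self-contained route than the paper. The paper proves this lemma by specializing the first-order optimality equations of Theorem \ref{thm: central attack performance}: with $x(0)=0$, \eqref{eq: optimal necessary one} collapses to $[\mathcal{B}_a^\transpose\mathcal{B}_a-\gamma_c M_c]U^*_c=0$, so any optimizer is a generalized eigenvector scaled via \eqref{eq: optimal necessary two}, the cost of such a candidate is $\gamma_c\widetilde\delta_c$, and the maximum is obtained by picking $\gamma_c=\rho_\text{max}$ --- this is exactly your cross-check paragraph. What you add is a direct variational argument that bypasses the KKT machinery: the homogeneity/rescaling step showing the constraint must be active, followed by the Courant--Fischer characterization of the definite pencil $(\mathcal{B}_a^\transpose\mathcal{B}_a,M_c)$. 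This buys something real: the paper's argument only identifies the optimizer among stationary points and implicitly assumes an optimum exists (i.e., that the problem is bounded), whereas your Rayleigh-quotient argument proves attainment and optimality outright, given $M_c>0$.

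Your final observation about well-posedness is a genuine point the paper leaves implicit. Since $\mathcal{F}^{(a)}_c=(I_T\otimes C)\mathcal{B}_a$, one indeed has $\mathrm{Null}(\mathcal{B}_a)\subseteq\mathrm{Null}(M_c)$, and if this inclusion is strict then (P.2) is unbounded (an attack direction invisible to the processed measurements but with nonzero state impact), so $\rho_\text{max}(\mathcal{B}_a^\transpose\mathcal{B}_a,M_c)$ is not well defined; the lemma as stated, and the paper's proof, both tacitly require the nondegeneracy condition you make explicit ($M_c>0$, equivalently $N_c\mathcal{F}^{(a)}_c$ of full column rank). Flagging this as a hypothesis, as you do, strengthens rather than contradicts the result.
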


\smallskip 

A proof of Lemma \ref{lma: central attack performance zero initial
  state} is postponed to the Appendix. From \eqref{eq: optimal cost
  central}, notice that the system degradation caused by an optimal
attack depends on the detector's tolerance, as measured by
$\widetilde \delta_c$, and the system dynamics, as measured by
$\rho_\text{max}\left(\cdot\right)$. See Remark \ref{remark:
  degradation large noise} for the influence of processed
measurement's noise uncertainty on the system degradation due to
optimal attacks.

\begin{remark}{\bf {\it \textbf{(Optimal attack vector against
        decentralized detector)}}}\label{remark: optimal attack
    decentralized detector} To characterize the performance
  degradation of the system analytically, we consider a relaxed form
  of detection constraint. Specifically, we design optimal attacks
  subjected to $\overline{P}^D_d\leq \delta_d$ instead of
  $P^D_d\leq \delta_d$, where $\overline{P}^D_d$ is an upper bound on
  $P^D_d$ (see Lemma \ref{lma: ub PDD}). The design of optimal attacks that are
  undetectable from the decentralized detector can be formulated in the following way:
  \begin{align*}
    \text{{(P.3)}}\quad& \underset{U^a}{\text{max}}
    & & \sum_{k=1}^{T}\overline{x}(k)^\transpose\overline{x}(k) \\
                       & \text{subject to}
    & & \sum\limits_{i=1}^{N}(U_i^a)^\transpose  M_i (U_i^a) \leq \widetilde{\delta}_d,\\
                       &&&\overline x(k+1)=A \overline x(k) + B^a
                           u^a(k) ,
  \end{align*}
  where the summation in the detectability constraint follows from
  Lemma \ref{lma: ub PDD} and the fact that
  $\overline{P}^D_d\leq \delta_d$ becomes equivalent to
  $\sum_{i=1}^{N}\lambda_i\leq \widetilde \delta_d$, where $\widetilde{\delta}_d=Q^{-1}_{p_\text{sum},\tau_\text{min}}(\delta_d)$, $p_{\text{sum}}=\sum_{i=1}^Np_i$, and $\tau_{\text{min}}=\min\limits_{1 \leq i \leq N}\tau_i$. Let $\Pi_i$ be a
  permutation matrix such that $U_i^a=\Pi_iU^a$, and let
  $\Pi=\left[\Pi_1^\transpose,\ldots,\Pi_N^\transpose\right]^\transpose$
  and $M_d=\Pi^\transpose\mathrm{blkdiag}(M_1,\ldots,M_N)\Pi$. For
  any solution $U^*_d$ of (P.2), there exist $\gamma_d>0$ such that
  the pair $\left(U^*_d,\gamma_d\right)$ solves the following
  optimality equations:
  	\begin{align*}
  	\left[ \mathcal{B}_a^\transpose\mathcal{B}_a-\gamma_d M_d\right]U^*_d+\mathcal{B}_a^\transpose\mathcal{A}x(0)&=0, \text{ and }\\
  	\left( U^*_d\right)^\transpose M_d (U^*_d)&=\widetilde \delta_d. 
  	\end{align*}
   Further, if $x(0) = 0$, then the largest
  degradation is
  $J^*_d=\widetilde \delta_d\,\rho_\text{max}\left( \mathcal{B}_a^\transpose\mathcal{B}_a, M_d\right) $.\QEDB
\end{remark}

\begin{remark}{\bf {\it \textbf{(Maximum degradation of the system
        performance with respect to system noise)}}}\label{remark:
    degradation large noise} To see the role of noise level, in
  the processed measurements, on the system degradation, we consider
  the following covariance matrices: $\Sigma_{w_i}=\sigma^2I_{n_i}$
  and $\Sigma_{v_i}=\sigma^2I_{r_i}$, for $i \in
  \{1,\ldots,N\}$. Then, from \eqref{eq: optimal cost central} we have
  \begin{align}\label{[eq: noise effect on cost]}
    J^*_c&=\sigma^2\,\widetilde{\delta}_c\,\left[ \rho_\text{max}\left(
           \mathcal{B}_a^\transpose \mathcal{B}_a, \widetilde M_c \right)\right], 
  \end{align}
  where
  $ \widetilde M_c= \left(N_c\mathcal{F}_c^{(a)}\right)^\transpose
  \left[\mathcal{F}_c^{(w)}\left( \mathcal{F}_c^{(w)}\right)
    ^\transpose+I\right]^{-1}\left(N_c\mathcal{F}_c^{(a)}\right)$. From
  \eqref{[eq: noise effect on cost]} we note that the system
  degradation increases with the increase in the noise level, i.e., $\sigma^2$. \QEDB
\end{remark}

\section{Numerical comparison of centralized and decentralized
  detectors}\label{sec: simulations}
In this section, we demonstrate our theoretical findings on the IEEE
RTS-96 power network model \cite{CG-Wo:99}, which we partition into
three subregions as shown in Fig. \ref{fig: rts-network}. We followed the
approach in \cite{fd-fp-fb:12b} to obtain a linear time-invariant
model of the power network, and then discretized it using a sampling
time of $0.01$ seconds. For a 
false alarm probability $P^F_c=P^F_d=0.05$, we consider the family of
attacks
$U^a = \sqrt{ \theta /(\boldsymbol{1}^\transpose M_c \boldsymbol{1})}
\boldsymbol{1}$, where $\boldsymbol{1}$ is the vector of all ones and
$\theta > 0$. It can be shown that the noncentrality parameters
satisfy $\lambda_c=\theta$ and
$\lambda_i=\theta (\boldsymbol{1}^\transpose M_i
\boldsymbol{1})/(\boldsymbol{1}^\transpose M_c \boldsymbol{1})$, and moreover, the choice of vector $\boldsymbol{1}$ is arbitrary and it does not affecting the following results.


\begin{figure}[t]
  \centering
  \includegraphics[width=0.6\linewidth]{./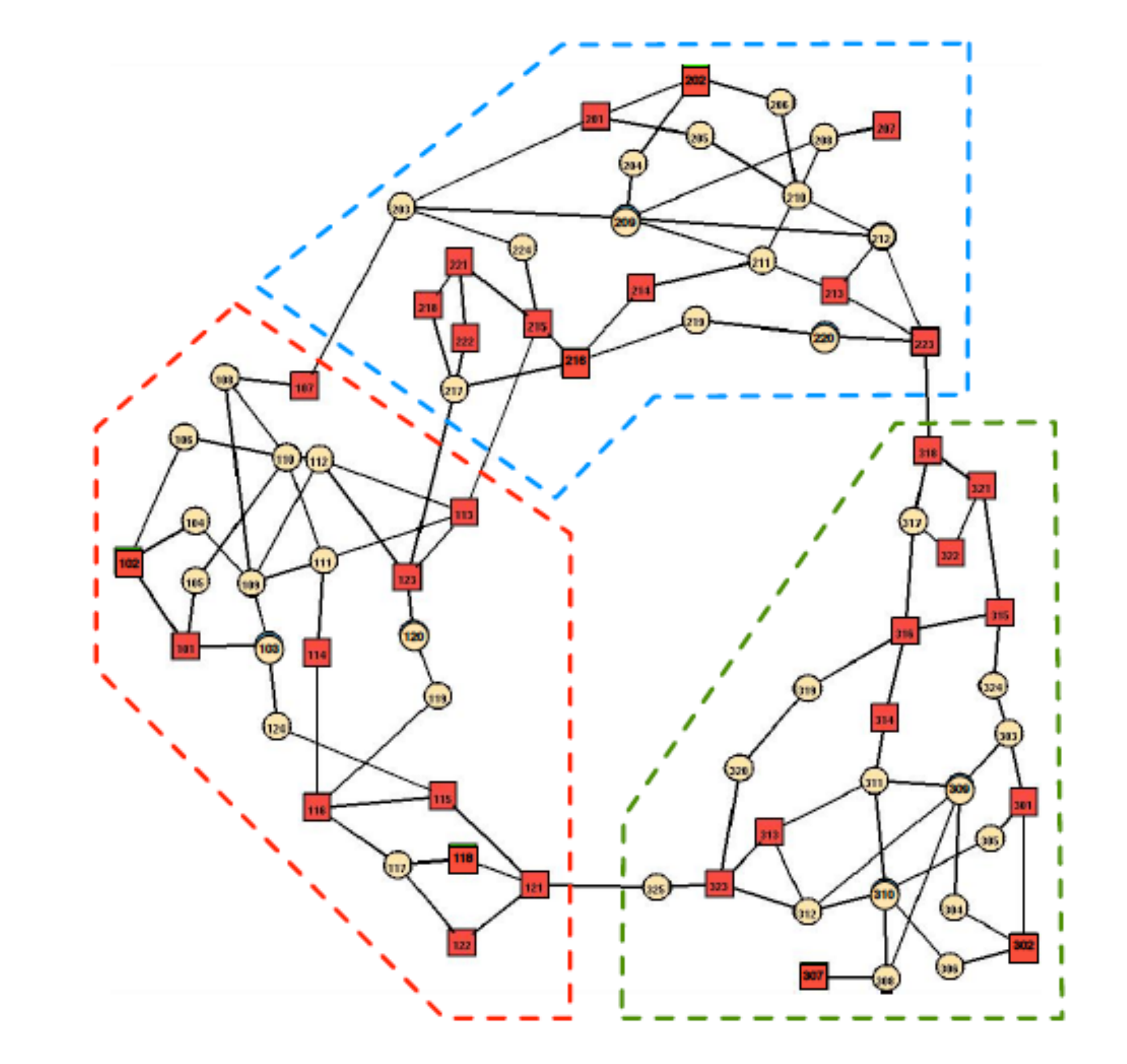}
  \caption {The figure shows a single-line diagram of IEEE RTS96 power
    network, which is composed of three weakly-coupled areas
    (subsystems). The square nodes denote the generators, while the
    circular nodes denotes the load buses of the network
    \cite{fd-fp-fb:12b}.} \label{fig: rts-network}
\end{figure}

\begin{figure}[H]
	\centering
	\includegraphics[width=1.0\linewidth]{./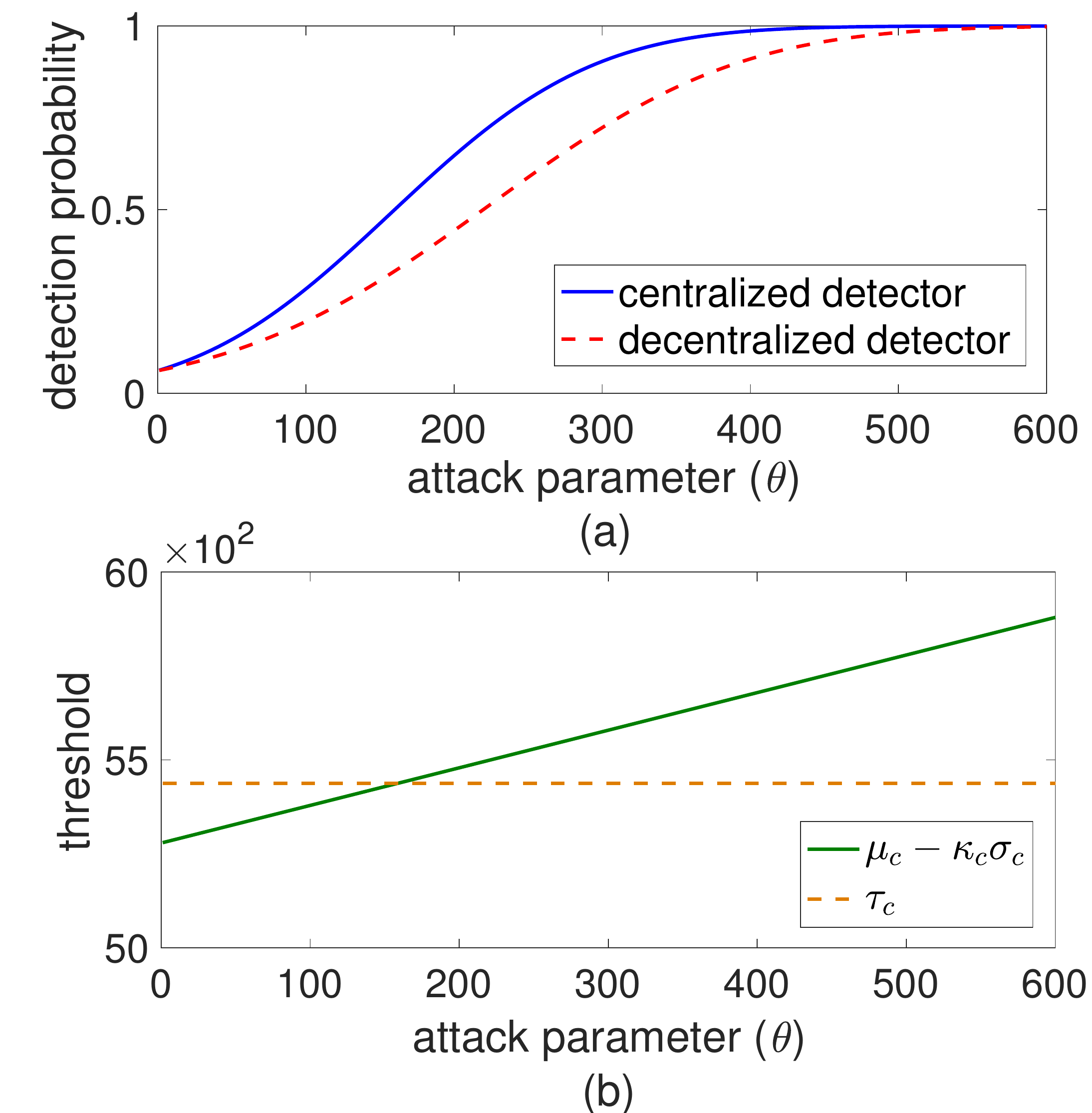}
	\caption {An illustration of a scenario in which the centralized
		detector outperforms the decentralized detector for the 
		IEEE RTS-96 power network. In panel (a), we plot the detection
		probabilities of the detectors with respect to the attack
		parameter $\theta$. Instead, in panel (b) we plot the right (solid line) and
		left hand expressions (dashed line) of the inequality in \eqref{eq: modified
			sufficient condition 1} as a function of $\theta$. For attacks
		such that $\theta > 200$, the sufficient condition \eqref{eq:
			modified sufficient condition 1} holds true, it guarantees that $P^D_c\geq
		P^D_d$.} \label{fig: simulation_fig_1}
\end{figure}

\smallskip 
\noindent\textit{(Illustration of Theorem \ref{thm: Pc greater than
    Pd})} For the measurement horizon of $T=100$ seconds, the values of $p_c$ and $\tau_c$ are $5130$ and $5480.6$, respectively. Fig. \ref{fig: simulation_fig_1} show that the detection
probabilities of the centralized and decentralized detectors increase
monotonically with the attack parameter $\theta$. As predicted by
the sufficient condition \eqref{eq: modified sufficient condition 1}
and shown in Fig.~\ref{fig: simulation_fig_1}, the centralized detector is guaranteed to
outperform the decentralized detector when $\theta > 173$. This
figure also shows that our condition is conservative, because $P^D_c
\geq P^D_d$ for all values of $\theta$ as shown in
Fig.~\ref{fig: simulation_fig_1}.

\begin{figure}[H]
	\centering
	\includegraphics[width=1.0\linewidth]{./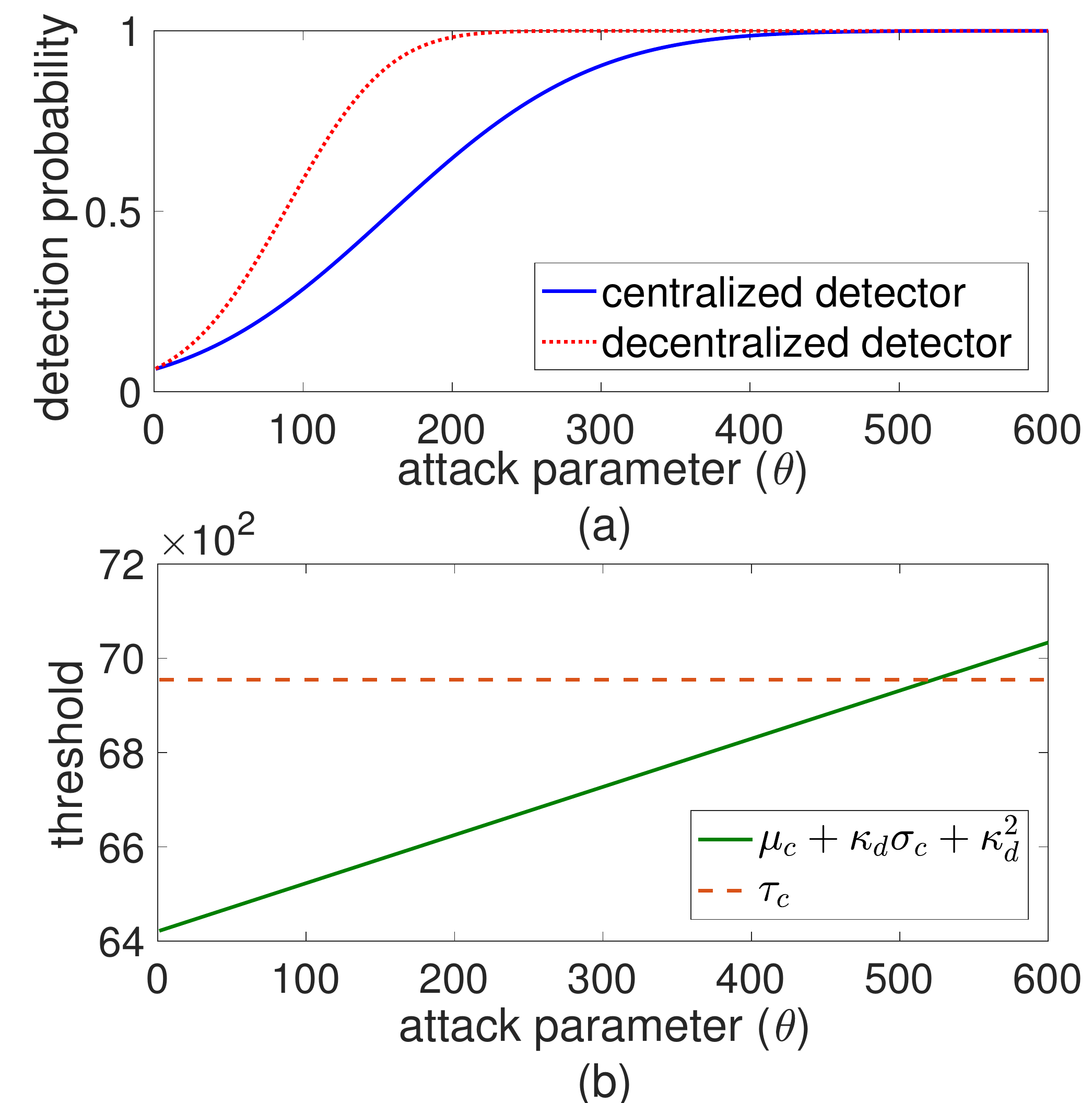}
	\caption {An illustration of a scenario in which the decentralized
		detector outperforms the centralized detector for the 
		IEEE RTS-96 power network. In panel (a), we plot the detection
		probabilities of the detectors with respect to the attack
		parameter $\theta$. Instead, in panel (b) we plot the right (solid line) and
		left hand expressions (dashed line) of the inequality in  \eqref{eq: modified sufficient condition 2} as a function of
		$\theta$. For attacks such that $\theta < 500$, the sufficient
		condition \eqref{eq: modified sufficient condition 2} holds true,
		and it guarantees that
		$P^D_c\leq P^D_d$.} \label{fig: simulation_fig_2}
\end{figure}

\begin{figure}[H]
	\centering
	\includegraphics[width=1.0\linewidth]{./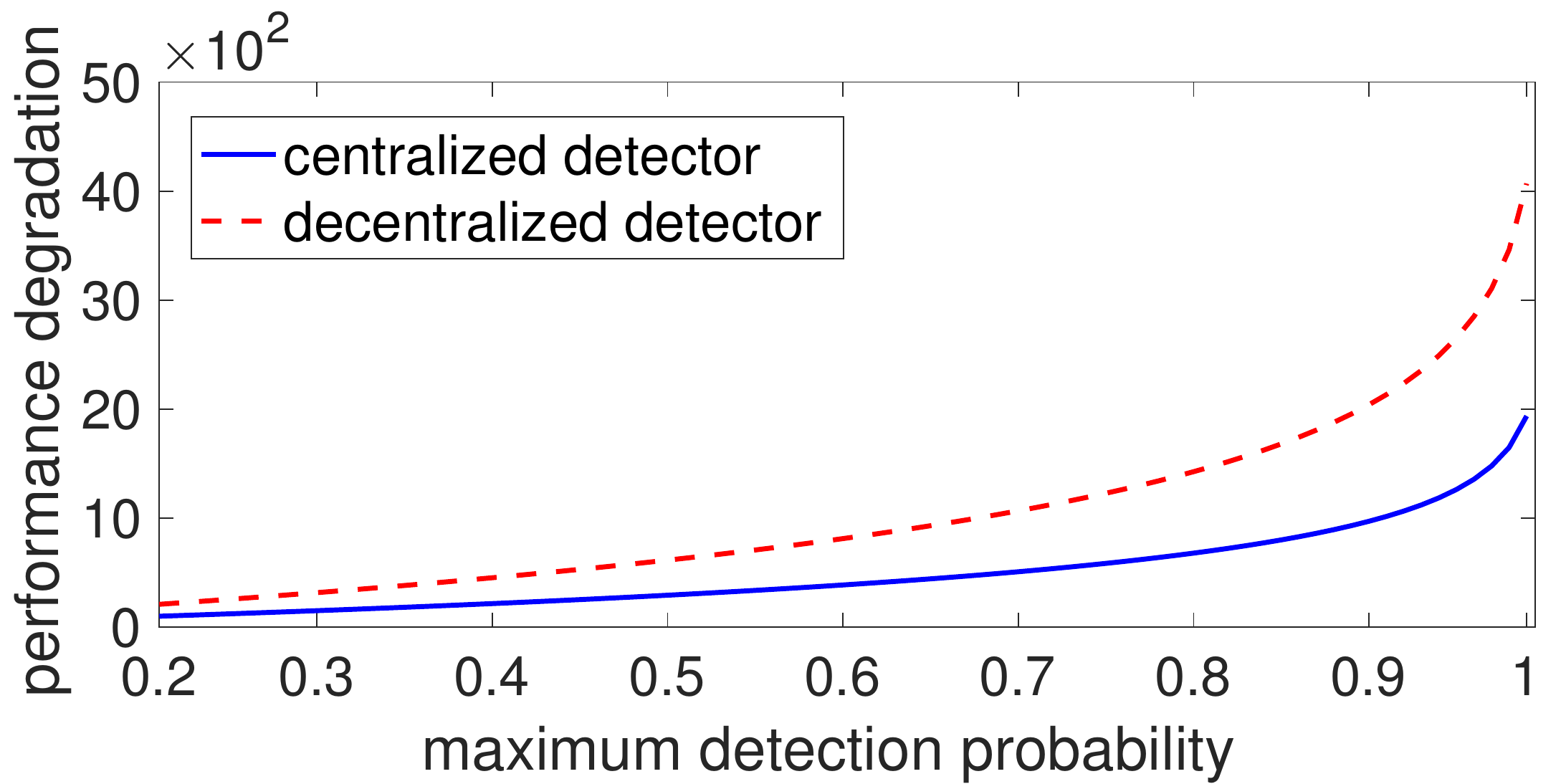}
	\caption {This figure shows the performance degradation induced by
		undetectable optimal attacks on the IEEE RTS-96 power network. The performance
		degradation is computed using the optimal cost $J^*_c$ and $J^*_d$
		derived in Lemma \ref{lma: central attack performance zero initial
			state} and Remark \ref{remark: optimal attack decentralized
			detector}, respectively. Instead, the maximum detection
		probability is given by the tuning parameters
		$\delta_c$ and $\delta_d$ in the detection
		probability constraints of the optimization problems (P.2) and
		(P.3), respectively.} \label{fig: simulation_fig_3}
\end{figure}

\smallskip 
\noindent\textit{(Illustration of Theorem \ref{thm: Pd greater than
		Pc})} Contrary to the previous example, by letting $T=125$ seconds, we obtain $p_c=6755$ and $\tau_c=6947.3$. For these choice of parameters, the decentralized detector is guaranteed to
outperform the centralized detector when $\theta\leq 511$. This
behavior is predicted by our sufficient condition \eqref{eq: modified
	sufficient condition 2}, and it is illustrated in Fig.~\ref{fig: simulation_fig_2}. As in
the previous example, the estimation provided by our condition
\eqref{eq: modified sufficient condition 2} is conservative, as
illustrated in Fig.~\ref{fig: simulation_fig_2}.

\smallskip 
\noindent\textit{(Illustration of Lemma \ref{lma: central attack
		performance zero initial state})} In Fig. \ref{fig:
	simulation_fig_3} we compare the performance degradation induced by
the optimal attacks designed according to the optimization problems (P.2) and (P.3) with zero initial conditions. In
particular, we plot the optimal costs $J^*_c$ and $J^*_d$ against the
tolerance levels $\widetilde \delta_c$ and $\widetilde \delta_d$,
respectively. As expected, the performance degradation is proportional
to the tolerance levels and, for the considered setup, it is larger
in the case of the decentralized detector.

\section{Conclusion}\label{sec: conclusions}
In this work we compare the performance of centralized and
decentralized schemes for the detection of attacks in
stochastic interconnected systems. In addition to quantifying the
performance of each detection scheme, we prove the counterintuitive
result that the decentralized scheme can, at times, outperform its
centralized counterpart, and that this behavior results due to the simple
versus composite nature of the attack detection problem. We illustrate
our findings through academic examples and a case study based on the
IEEE RTS-96 power system. Several questions remain of interest for
future investigation, including the characterization of optimal
detection schemes, an analytical comparison of the degradation induced
by undetectable attacks as a function of the detection scheme, and the
analysis of iterative detection strategies.


\bibliographystyle{IEEEtran}
\bibliography{./BIB}

\setcounter{section}{1} 
\renewcommand{\thetheorem}{A.\arabic{theorem}}
\section*{APPENDIX}  


\noindent{\it Proof of Lemma \ref{thm: stat prop of processed}}:

\smallskip 
Since the attack vectors $U^a_i$ and $U^a$ are deterministic, and $W_i$, $V_i$, $V$, and $W$ are zero mean random vectors, from the linearity of the expectation operator it follows from \eqref{eq: kernels} that 
\begin{align*}
	\beta_i&\triangleq \mathbb{E}[\widetilde Y_i]=N_i\mathcal{F}^{(a)}_iU^a_i, \text{ and }\\
	\beta_c&\triangleq \mathbb{E}[\widetilde Y_c]=N_c\mathcal{F}^{(a)}_cU^a_c.
\end{align*}
Further, from the properties of $\mathrm{Cov}[\cdot]$, we have the following: 
	\begin{align*}
		\Sigma_i&\triangleq \mathrm{Cov}\left[ \widetilde Y_i\right]\\
		&=N_i\mathrm{Cov}\left[ Y_i\right]N_i^\transpose\\
		&\overset{(a)}{=}N_i\left[ \mathrm{Cov}\left[\mathcal{F}_i^{(w)}W_i\right]+\mathrm{Cov}[V_i] \right]N_i^\transpose\\
		&\overset{(b)}{=}N_i\left[\left( \mathcal{F}_i^{(w)}\right) \mathrm{Cov}\left[W_i\right]\left( \mathcal{F}_i^{(w)}\right)^\transpose +\mathrm{Cov}[V_i] \right]N_i^\transpose\\
		&=N_i\left[\left( \mathcal{F}_i^{(w)}\right) \left(I_T\otimes \Sigma_{w_i}\right)\left( \mathcal{F}_i^{(w)}\right)^\transpose+\left( I_T\otimes \Sigma_{v_i}\right)  \right]N_i^\transpose,
	\end{align*}
	where (a) follows because the measurement and process noises are independent of each other. Instead, (b) is due to the fact that the noise vectors are independent and identically distributed. Similar analysis also results in the expression of $\Sigma_c$, and hence the details are omitted. Finally, by invoking the fact that linear transformations preserve Gaussianity, the distribution of $\widetilde Y_i$ and $\widetilde Y_c$ is Gaussian as well. \QED

\medskip 
\noindent{\it Proof of Lemma \ref{lma: PF and PD via Q-functions}}:

\smallskip 
From the statistics and distributional form of $\widetilde Y_i$ and $\widetilde Y_c$ (see \eqref{eq: mean and covariance}), and threshold tests defined in \eqref{eq: test_stat_local} and \eqref{eq: test_stat_central}, it follows from \cite[Theorem 3.3.3]{anderson} that, under 
\begin{enumerate}
	\item null hypothesis $H_0$: $\Lambda_i\sim \chi^2(p_i)$ and $\Lambda_c\sim \chi^2(p_c)$, where $p_i$ and $p_c$ are defined in \eqref{eq: lambda 1}.
	\item alternative  hypothesis $H_1$: $\Lambda_i\sim \chi^2(p_i,\lambda_i)$ and $\Lambda_c\sim \chi^2(p_c,\lambda_c)$, where $\lambda_i=\beta_i^\transpose\Sigma_i^{-1}\beta_i$ and $\lambda_c=\beta_c^\transpose\Sigma_c^{-1}\beta_c$. 
\end{enumerate}
By substituting $\beta_i=N_i\mathcal{F}^{(a)}_iU^a_i$ and $\beta_c=N_c\mathcal{F}^{(a)}_cU^a_c$ (see Lemma \ref{thm: stat prop of processed}) and rearranging the terms, we get the expressions of $\lambda_i$ and $\lambda_c$ in \eqref{eq: lambda 1}. Finally, from the aforementioned distributional forms of $\Lambda_i$ and $\Lambda_c$, it now follows that the false alarm and the detection probabilities of the tests \eqref{eq: test_stat_local} and \eqref{eq: test_stat_central} are the right tail probabilities (represented by $Q(\cdot)$ function) of the central and noncentral chi-squared distributions, respectively. Hence, the expressions in \eqref{eq: test probabilities} follow. \QED

\medskip 
\noindent{\it Proof of Lemma \ref{lma: parameters}}: 

\smallskip 
Without loss of generality let $i=1$. Thus, it suffices to show that a) $ p_1\leq p_c$ and b) $\lambda_1\leq \lambda_c$. 

\smallskip 
\noindent {\bf Case (a)}: For brevity, define
\begin{align}\label{eq: sigma tildas}
\begin{split}
\widetilde \Sigma_i&= \left[ \left( \mathcal{F}^{(w)}_i\right)\left(I_T\otimes \Sigma_{w_i}\right) \left( \mathcal{F}^{(w)}_i\right)^\transpose+\left(I_T\otimes \Sigma_{v_i}\right)\right] \text{ and }\\
\widetilde \Sigma_c&= \left[\left( \mathcal{F}^{(w)}_c\right) \left(I_T\otimes
\Sigma_{w}\right) \left(
\mathcal{F}^{(w)}_c\right)^\transpose+\left(I_T\otimes
\Sigma_{v}\right)\right],  
\end{split}
\end{align}
and note that $\widetilde \Sigma_i>0$ and $\widetilde \Sigma_c>0$. From Lemma \ref{thm: stat prop of processed}, Lemma \ref{lma: PF and PD via Q-functions}, and \eqref{eq: sigma tildas}, we have 
\begin{align*}
p_c=\text{Rank}(\Sigma_c)&=\text{Rank}\left( \left( N_c\widetilde \Sigma_c^{1/2}\right)  \left( N_c\widetilde \Sigma_c^{1/2}\right)^\transpose\right) \\
&=\text{Rank}\left( N_c\widetilde \Sigma^{1/2}\right)=\text{Rank}\left( N_c\right).
\end{align*}
Similarly, $p_1=\text{Rank}\left( N_1\right)$. Since, $N_1^\transpose$ and $N_c^\transpose$ are a basis vectors of the null spaces $\mathcal{N}_1^L$ and $\mathcal{N}_c^L$ (see \eqref{eq: null spaces}) respectively, it follows from Proposition \ref{prop: null space containment} that $p_1\leq p_c$. 

\smallskip 
\noindent {\bf Case (b)}: As the proof for this result is rather long and tedious, we break it down in to multiple steps:
\begin{itemize}
	\item \textit {Step 1}: Express $\lambda_1$ and $\lambda_c$ using the statistics of a permuted version of $Y_c$. 
	\item \textit {Step 2}: Obtain lower bound on $\lambda_c$, which depends on the statistics of the measurements pertaining to Subsystem 1. 
	\item \textit {Step 3}: Show that $\lambda_1$ is less than bound in Step $2$. 
\end{itemize}

\noindent \emph{\textit{Step 1 (alternative form of $\lambda_1$ and $\lambda_c$)}:}

Notice that $\lambda_1$ and $\lambda_c$ in \eqref{eq: lambda 1} can be expressed as $\lambda_1=\beta_1^\transpose\Sigma_1^{-1}\beta_1$ and $\lambda_c=\beta_c^\transpose\Sigma_c^{-1}\beta_c$, respectively, where $\beta_1$, $\beta_c$, $\Sigma_1$, and $\Sigma_c$ are defined in Lemma \ref{thm: stat prop of processed}. For convenience, we express $\lambda_1$ and $\lambda_c$ in an alternative way. Let $i \in \{1,\ldots,N\}$ and consider the $i-$th sensor measurements of \eqref{eq: overall with attack}
\begin{align}\label{eq: yci}
	y_{c,i}(k)= \underbrace{\begin{bmatrix}
	0 & \cdots & C_i & \cdots & 0
	\end{bmatrix}}_{\triangleq C_{c,i}}x(k)+v_i(k).
\end{align}
Also, define $Y_{c,i}=\begin{bmatrix}
y_{c,i}^\transpose(1) & \ldots & y_{c,i}^\transpose(T)
\end{bmatrix}^\transpose$ and $\widehat Y_c=\begin{bmatrix}
Y_{c,1}^\transpose & \ldots & Y^\transpose_{c,N}
\end{bmatrix}$.
Now, from \eqref{eq: yci} and state equation in \eqref{eq: overall with attack}, $Y_{c,i}$ can be expanded as
\begin{align*}
Y_{c,i}=\mathcal{O}_{c,i}x(0)+\mathcal{F}^{(a)}_{c,i}U^a+\mathcal{F}^{(w)}_{c,i}W+V_i,  
\end{align*}
where the matrices $\mathcal{O}_{c,i}$, $\mathcal{F}^{(a)}_{c,i}$, and $\mathcal{F}^{(w)}_{c,i}$ are similar to the matrices defined in Section II-A. By substituting the above decomposition of $Y_{c,i}$ in $\widehat Y$ we have 
\begin{align*}
\widehat Y_c&=\underbrace{\begin{bmatrix}
\mathcal{O}_{c,1}\\
\vdots\\
\mathcal{O}_{c,N}
\end{bmatrix}}_{\widehat{\mathcal{O}}_c}x(0)+\underbrace{\begin{bmatrix}
\mathcal{F}^{(a)}_{c,1}\\
\vdots\\
\mathcal{F}^{(a)}_{c,N}
\end{bmatrix}}_{\widehat{\mathcal{F}}^{a}_c}U^a+\underbrace{\begin{bmatrix}
\mathcal{F}^{(w)}_{c,1}\\
\vdots\\
\mathcal{F}^{(w)}_{c,N}
\end{bmatrix}}_{\widehat{\mathcal{F}}^{w}_c}W+V.
\end{align*}
 Moreover, from the distributional assumptions on $W$ and $V$, it readily follows that (similarly to the proof of Lemma \ref{thm: stat prop of processed}),
\begin{align}\label{eq: y hat distribution}
\widehat Y_c \sim  \mathcal{N}\left(\widehat{\mathcal{O}}_cx(0)+\widehat{\mathcal{F}}^{a}_cU^a, \Sigma\right),
\end{align}
where $\Sigma=\left( \widehat{\mathcal{F}}^{w}_c \right) \left( I_T\otimes\Sigma_w\right)  \left(\widehat{\mathcal{F}}^{w}_c \right)^\transpose+\left( I_T\otimes \Sigma_v\right) $, and $\Sigma_w$ and $\Sigma_v$ are defined same as in Lemma \ref{thm: stat prop of processed}. 

Now, consider the measurement equation $y_i(k)$ in 
\eqref{eq: subystem without attack} and note that $C_{c,i}x(k)=C_ix_i(k)$. Thus, $y_i(k)=y_{c,i}(k)$, for all $i \in \{1,\ldots,N\}$ and $k \in \mathbb{N}$. From this observation it follows that $Y_i=Y_{c,i}=\Pi_i \widehat Y_c$, where $\Pi_i$ is a selection matrix. Let $\widetilde{N}_i=N_i\Pi_i$ and note that $\widetilde{N}_i \widehat{\mathcal{O}}=N_i\mathcal{O}_{c,i}$. Further from Proposition \ref{prop: null space containment} we have $N_i\mathcal{O}_{c,i}=0$. With these facts in place, from Lemma \ref{thm: stat prop of processed} we now have
\begin{align}\label{eq: new beta_i}
\begin{split}
	\beta_i&=\widetilde{N}_i\widehat{\mathcal{F}}^{a}_cU^a, \text{ and }\\
\Sigma_i&=\widetilde{N}_i\Sigma\widetilde{N}_i^\transpose. 
\end{split}
\end{align}
 Similarly, since $\widehat Y_c$ is just a rearrangement of $Y_c$ (see \eqref{eq: agree central}),
 there exists a permutation matrix $Q$ such that $Y_c=Q\widehat Y_c$, and, ultimately $\widetilde Y_c=N_cY_c=N_cQ\widehat Y_c$. Thus,
\begin{align}\label{eq: beta c new}
\begin{split}
\beta_c&=N_cQ\widehat{\mathcal{F}}^{a}_cU^a, \text{ and }\\
  \Sigma_c&= N_cQ\Sigma(N_cQ)^\transpose. 
\end{split}
\end{align}
Let $z=\widehat{\mathcal{F}}^{a}_cU^a$. From \eqref{eq: new beta_i} and \eqref{eq: beta c new} we have
\begin{align}\label{eq: lambda new}
\begin{split}
\lambda_1&=z^\transpose\widetilde N_1^\transpose\left[\widetilde N_1\Sigma \widetilde N_1^\transpose \right]^{-1}  \widetilde N_1,\\
\lambda_c&=z^\transpose \left( N_cQ\right) ^\transpose\left[\left( N_cQ\right) \Sigma  \left( N_cQ\right) ^\transpose \right]^{-1} \left( N_cQ\right). \\
\end{split}
\end{align}

\smallskip 
\noindent \emph{\textit{Step 2 (lower bound on $\lambda_c$)}:} 

Since, $Y_c=N_cY_c=N_cQ\widehat Y_c$, it follows that $N_cQ$ is the basis of the null space $\widehat{\mathcal{O}}_c$. Further, the row vectors of $\mathcal{O}_{c,i}$ and $\mathcal{O}_{c,j}$ are linearly independent, whenever $i \ne j$. Using these facts we can define $N_{c,i}=\begin{bmatrix}
N_{c,i}^i & \cdots & N_{c,i}^N
\end{bmatrix}$ such that $N_cQ=\begin{bmatrix}
N_{c,1}^\transpose&
\cdots&
N_{c,N}^\transpose
\end{bmatrix}^\transpose$, 
where $N_{c,i}^i\mathcal{O}_{c,i}=0$. Let $P_1=\begin{bmatrix}
\left(N_{c,2}\right)^\transpose & \cdots &  \left(N_{c,N}\right)^\transpose
\end{bmatrix}^\transpose$ and note that 
\begin{align*}
\left( N_cQ\right) \Sigma\left( N_cQ\right) ^\transpose&=\begin{bmatrix}
N_{c,1}\\
P_1
\end{bmatrix}\Sigma
\begin{bmatrix}
N_{c,1}^\transpose & P_1^\transpose
\end{bmatrix}\\
&=\begin{bmatrix}
N_{c,1}\Sigma N_{c,1}^\transpose & N_{c,1}\Sigma P_1^\transpose\\
N_{c,1}^\transpose\Sigma P_1 & P_1^\transpose\Sigma P_1
\end{bmatrix}.
\end{align*}
Let $S_1=N_{c,1}\Sigma N_{c,1}^\transpose$. Since $\Sigma >0$, it follows that both the matrices $S_1$ and $P_1^\transpose\Sigma P_1$ are invertible. Hence, from Schur's complement, there exists a matrix $X\geq 0$ such that 
\begin{align}\label{eq: schur one}
\left[ \left( N_cQ\right) \Sigma\left( N_cQ\right) ^\transpose\right]^{-1}=\begin{bmatrix}
S_1^{-1}  & 0 \\
0 & 0
\end{bmatrix}+X. 
\end{align}
Similarly, consider the following partition of $\Sigma$:
\begin{align*}
	\Sigma=\begin{bmatrix}
		\Sigma_{11} & \Sigma_{12}\\
		\Sigma_{21} & \Sigma_{22}
	\end{bmatrix},
\end{align*}
where $\Sigma_{11}>0$ and $\Sigma_{22}>0$, and let $S_2=(N^1_{c,1})\Sigma_{11}(N^1_{c,1})^\transpose$. Invoking Schur's complement, we have the following:
\begin{align}\label{eq: schur two}
S_1^{-1}=\begin{bmatrix}
S_2 ^{-1}  & 0 \\
0 & 0
\end{bmatrix}+Y, 
\end{align}
where $Y\geq 0$. Substituting\eqref{eq: schur one} and \eqref{eq: schur two} in \eqref{eq: lambda new}, we have 
\begin{align}\label{eq: lambda_c_inequality}
\lambda_c&=
z^\transpose (N_cQ)^\transpose\begin{bmatrix}
S_1^{-1}  & 0 \\
0 & 0
\end{bmatrix}(N_cQ)z+\underbrace{z^\transpose(N_cQ)^\transpose X(N_cQ)z}_{\geq 0}\nonumber\\
&\geq\left[ 
(N_{c,1}z)^\transpose\,
(P_1z)^\transpose
\right] \begin{bmatrix}
S_1^{-1}  & 0 \\
0 & 0
\end{bmatrix}\begin{bmatrix}
N_{c,1}z\\
P_1z
\end{bmatrix}\nonumber\\
&=z^\transpose \left( N_{c,1}^\transpose S_1^{-1}N_{c,1}\right) z\nonumber \\
&=z^\transpose (N_{c,1})^\transpose\begin{bmatrix}
S_2^{-1}  & 0 \\
0 & 0
\end{bmatrix}(N_{c,1})z+\underbrace{z^\transpose(N_{c,1})^\transpose Y(N_{c,1})z}_{\geq 0}\nonumber \\
& \geq  z^\transpose (N_{c,1})^\transpose\begin{bmatrix}
S_2^{-1}  & 0 \\
0 & 0
\end{bmatrix}(N_{c,1})z\nonumber\\
&=z^\transpose
\begin{bmatrix}
(N_{c,1}^1)^\transpose S_2^{-1} N_{c,1}^1 & 0\\
0 & 0\end{bmatrix}z.
\end{align}
Instead, $\lambda_1$ in \eqref{eq: lambda new} can be shown as 
\begin{align}\label{eq: lambda_1_final}
	\lambda_1=z^\transpose
	\begin{bmatrix}
	N_{1}^\transpose \left[ N_1\Sigma_{11}N_1^\transpose\right] ^{-1} N_{1} & 0\\
	0 & 0\end{bmatrix}z,
\end{align}
where we used the fact that $\widetilde N_1=N_1\Pi_1$. 

\smallskip 
\noindent \emph{\textit{Step 3 ($\lambda_c\geq \lambda_1$)}:}

For $\lambda_c\geq \lambda_1$ to hold true, it suffices to show the following:
\begin{align*}
(N_{c,1}^1)^\transpose S_2^{-1} N_{c,1}^1\geq N_{1}^\transpose \left[N_1\Sigma_{11} N_1^\transpose\right] ^{-1} N_{1}. 
\end{align*}
By invoking Proposition \ref{prop: null space containment}, we note that there exists a full row rank matrix $F_1$, such that $N_1=F_{1}N_{c,1}^{1}$. Since $F_1^\transpose$ is a full column rank matrix, we can define an invertible matrix $\widetilde{F}_1^\transpose \triangleq \left[ F_1^\transpose\, M_1^\transpose\right]$, where $M_1$ forms a basis for null space of $F_1$, such that the following holds 
\begin{align*}
S_2^{-1}&=\widetilde{F}_1^\transpose\left[\widetilde{F}_1S_2 \widetilde{F}_1^\transpose\right]^{-1}\widetilde{F}_1\\
&=\widetilde{F}_1^\transpose\begin{bmatrix}
F_1 S_2F_1^\transpose & F_1S_2M_1^\transpose \\
M_1 S_2F_1^\transpose & M_1 S_2M_1^\transpose 
\end{bmatrix}^{-1} \widetilde{F}_1.
\end{align*}
By invoking Schur's complement, it follows that 
\begin{align*}
\begin{bmatrix}
F_1 S_2F_1^\transpose & F_1S_2M_1^\transpose \\
M_1 S_2F_1^\transpose & M_1 S_2M_1^\transpose 
\end{bmatrix}^{-1}&=\begin{bmatrix}
\left( F_1 S_2F_1^\transpose\right)^{-1}  & 0 \\
0 & 0 
\end{bmatrix}+Y,
\end{align*}
where $Z\geq 0$. Hence,
\begin{align*}
(N_{c,1}^1)^\transpose S_2^{-1} N_{c,1}^1&=(\widetilde F_1N_{c,1}^1)^\transpose\begin{bmatrix}
	\left( F_1 S_2F_1^\transpose\right)^{-1} & 0 \\
	0 & 0 
	\end{bmatrix}	(\widetilde F_1N_{c,1}^1)\\
&\quad 
	+(\widetilde F_1N_{c,1}^1)^\transpose Z (\widetilde F_1N_{c,1}^1).
\end{align*}
By substituting $\widetilde F_1^\transpose=[F_1^\transpose \,M_1^\transpose]$ in the above expression, and rearranging the terms we have
\begin{align*}
(N_{c,1}^1)^\transpose S_2^{-1} N_{c,1}^1&=(F_1N_{c,1}^1)^\transpose\left( F_1 S_2F_1^\transpose\right)^{-1}(F_1N_{c,1}^1)\\
&\quad 
+(\widetilde F_1N_{c,1}^1)^\transpose Z (\widetilde F_1N_{c,1}^1).
\end{align*}
The required inequality follows by substituting $S_2=(N_{c,1}^1)\Sigma_{11}(N_{c,1}^1)^\transpose$ and $N_1=F_1N_{c,1}$, and recalling the fact that the sum of two positive semi definite matrices is greater than or equal to either of the matrices. \QED

\medskip 
\noindent{\it Proof of Lemma \ref{lma: PFD and PDD}:}

\smallskip
Let $\mathcal{E}_i$ be an event that the $i-$th local detector decides $H_1$ when the true hypothesis is $H_0$. Then, $P^{F}_i=\mathrm{Pr}\left[\mathcal{E}_i\right]$
. Let $\mathcal{E}_i^\complement$ be the complement of $\mathcal{E}_i$. Then, from \eqref{eq: PFD PDD definitions} it follows that 
\begin{align*}
P^F_d&=\mathrm{Pr}\left( \bigcup_{i=i}^N\mathcal{E}_i\right) =1-\mathrm{Pr}\left( \bigcap_{i=i}^N\mathcal{E}_i^\complement\right) \overset{(a)}{=}1-\prod_{i=1}^{N}\mathrm{Pr}\left( \mathcal{E}_i^\complement\right)\\ &=1-\prod_{i=1}^{N}\left(1-\mathrm{Pr}\left(\mathcal{E}_i\right)\right)=1-\prod_{i=1}^{N}\left( 1-P^F_i\right),
\end{align*} 
where for the $(a)$ we used the fact that the events $\mathcal{E}_i$ are mutually independent for all $i \in \{1,\ldots N\}$. To see this fact, notice that the event $\mathcal{E}_i$ is defined on  $\widetilde Y_i$ (see \eqref{eq: kernels}). Further, $\widetilde Y_i$ depends only on the deterministic attack signal $U^a_i$ and the noise vectors $V_i$ and $W_i$, but not on the interconnection signal $U_i$ (see \eqref{eq: expanded measurements local}). Now, by invoking the fact that noises variables across different subsystems are independent, it also follows that the events $\mathcal{E}_i$ are also mutually independent. Similar procedure will lead to the analogous expression for $P^D_d$ and hence, the details are omitted.  \QED

\medskip 
\noindent{\it Proof of Theorem \ref{thm: Pc greater than Pd}}: 

\smallskip 
Let $\mu_c=p_c+\lambda_c$ and $\sigma_c=\sqrt{2(p_c+2\lambda_c)}$, and assume that \eqref{eq: PDc greater than PDd} holds true. Then, from the monotonicity property of the CDF associated with the test statistic $\Lambda_c$, which follows $\chi^2(p_c,\lambda_c)$, we have the following inequality 
\begin{align*}
	\mathrm{Pr}\left[\Lambda_c\leq \tau_c\right]\leq \mathrm{Pr}\left[\Lambda_c\leq \mu_c-\sigma_c\sqrt{2N\ln\left(\frac{1}{1-P^D_\text{max}}\right) }\right].
\end{align*}
From the inequality \eqref{eq: tail bound two}, it now follows that 
\begin{align*}
\mathrm{Pr}\left[\Lambda_c\leq \tau_c\right]& \leq \exp\left(-N\ln\left( \frac{1}{1-P^D_\text{max}}\right) \right) \\
& = \exp\left(\ln\left(1-P^D_\text{max}\right)^N \right) \leq \prod_{i=1}^{N}\left(1-P^D_i\right), 
\end{align*}
where for the last inequality we used the fact that $P^D_i\leq P^D_\text{max}$ for all $i \in \{1,\ldots,N\}$. By using the above inequality and Lemma \ref{lma: PF and PD via Q-functions}, under hypothesis $H_1$, we have
\begin{align*}
	P^D_c=1-\mathrm{Pr}\left[\Lambda_c\leq \tau_c|H_1\right]\geq 1-\prod_{i=1}^{N}\left(1-P^D_i\right)=P^D_d. \quad \QED
	\end{align*}  
 
\smallskip 
\noindent{\it Proof of Theorem \ref{thm: Pd greater than Pc}}

\smallskip 
Let $\mu_c=p_c+\lambda_c$ and $\sigma_c=\sqrt{2(p_c+2\lambda_c)}$, and assume that \eqref{eq: PDd greater than PDc} holds true. Then, from the monotonicity property of the CDF associated with the test statistic $\Lambda_c$, which follows $\chi^2(p_c,\lambda_c)$, we have the following inequality 
\begin{align*}
\mathrm{Pr}\left[\Lambda_c\leq \tau_c\right]&\geq \mathrm{Pr}\left[\Lambda_c\leq \mu_c+\sigma_c\sqrt{2\ln\left(\frac{1}{1-(1-P^D_\text{min})^N}\right) }\right.\\
&\quad +\left. 2\ln\left(\frac{1}{1-(1-P^D_\text{min})^N}\right)\right].
\end{align*}
From the inequality \eqref{eq: tail bound one}, it now follows that 
\begin{align*}
\mathrm{Pr}\left[\Lambda_c\leq \tau_c\right]& \geq 1-\exp\left(-\ln\left( \frac{1}{1-(1-P^D_\text{min})^N}\right) \right), \\
& = 1-\exp\left(\ln\left(1-\left(1-P^D_\text{min}\right)^N\right) \right), \\
&\geq \prod_{i=1}^{N}\left(1-P^D_i\right)=1-\underbrace{\left[ 1-\prod_{i=1}^{N}\left(1-P^D_i\right)\right]}_{P^D_d}. 
\end{align*}
The result follows by substituting $P^D_c=1-\mathrm{Pr}\left[\Lambda_c\leq \tau_c|H_1\right]$ in the above inequality. \QED

\medskip 
\noindent{\it Proof of Theorem \ref{thm: central attack performance}}
	
\smallskip By recursively expanding the equality constraint of the optimization problem (P.2) we have 
\begin{align*}
	\begin{bmatrix}
	\overline{x}(1)\\
	\vdots \\
	\overline{x}(T)
	\end{bmatrix}&=\mathcal{A}x(0)+\mathcal{B}_aU^a
\end{align*}
By using the above identity, (P.2) can also be expressed as 
\begin{align*}
& \underset{U^a}{\text{max}}
& & \underbrace{\left[\mathcal{A}x(0)+\mathcal{B}_aU^a\right]^\transpose \left[\mathcal{A}x(0)+\mathcal{B}_aU^a\right]}_{f(U^a)}  \\
& \text{subject to}
& & (U^a)^\transpose  M_c (U^a) \leq \widetilde{\delta}_c.
\end{align*}
From the first-order necessary conditions \cite{EC-SZ:13} we now have

\begin{subequations}
	\begin{align}
\nabla  \left( f(U^*_c)-\gamma  (U^*_c)^\transpose  M_c (U^*_c) \right)&=0,\\
\gamma\left(\widetilde \delta_c-(U^*_c)^\transpose  M_c (U^*_c) \right)&=0 \label{eq: comp_slack},\\
\gamma&\geq 0,\\
(U^*_c)^\transpose  M_c (U^*_c) &\leq \widetilde{\delta}_c,
\end{align}
\end{subequations}
where the gradient $\nabla$ is with respect to $U^a$. 

\smallskip 
\noindent{\it Case (i)}: Suppose $(U^*_c)^\transpose  M_c (U^*_c) < \widetilde{\delta}_c$. Then $\gamma=0$ should hold true to ensure the complementarity slackness condition \eqref{eq: comp_slack}. Using these observations in the KKT conditions we now have $\nabla f(U^*_c)=0$. Further, since, $f(U^a)$ is a convex function of $U^a$, by evaluating the second derivative of $f(U^a)$ at $U^*_c$, it can be easily seen that the obtained $U^*_c$ results in minimum value of (P.2) rather than the maximum.Thus, for any $U^*_c$ of (P.2), the condition $(U^*_c)^\transpose  M_c (U^*_c) < \widetilde{\delta}_c$ cannot hold true. 

\smallskip 
\noindent{\it Case (ii)}: Suppose $	(U^*_c)^\transpose  M_c (U^*_c) = \widetilde{\delta}_c$. Then the KKT conditions can be simplified to the following: 
\begin{align*}
	\nabla  \left( f(U^*_c)-\gamma  (U^*_c)^\transpose  M_c (U^*_c) \right)&=0,\\
	(U^*_c)^\transpose  M_c (U^*_c) &= \widetilde{\delta}_c.
\end{align*}
The result now follows by evaluating the derivative on the left hand side of the first equality. \QED

\medskip 
\noindent{\it Proof of Lemma \ref{lma: central attack performance zero initial state}}

\smallskip 
By substituting $x(0)=0$ in \eqref{eq: optimal necessary one}, we note that any optimal attack $U^*_c$ is of the form $k\nu$, where $\nu$ is the generalized eigenvector of the pair $(\mathcal{B}^\transpose_a\mathcal{B}, M_c)$ \cite{DK:05}, and the scalar $k=\sqrt{\widetilde \delta_c/\nu^\transpose M_c \nu}$ is obtained from \eqref{eq: optimal necessary two}. Let $J_c$ be the optimal cost associated with an attack of the form $U^*_c=k\nu$. Then, 
\begin{align*}
	J_c&=(k\nu)^\transpose \mathcal{B}^\transpose_a\mathcal{B}_a (k\nu)=\gamma (k\nu)^\transpose M_c (k\nu)=\gamma \widetilde\delta_c, 
\end{align*}
where the first equality follows from the fact that the objective function  $\sum_{k=1}^{T}\overline{x}^\transpose(k)\overline x(k)$ in (P.2) can be expressed as $(U^*_c)^\transpose \mathcal{B}^\transpose_a\mathcal{B}_a (U^*_c)$, and the second equality follows from \eqref{eq: optimal necessary one}. Since $\nu$ is a generalized eigenvector of the pair $(\mathcal{B}^\transpose_a\mathcal{B}, M_c)$, it follows that $\gamma$ is the eigenvalue corresponding to $\nu$ and hence, $J_c$ is maximized when $\gamma$ is maximum, which is obtained for $v=v^\star$. The result follows since, $\gamma=\rho_\text{max}$, for $v=v^\star$. \QED

\medskip 
\begin{proposition}\label{prop: null space containment} Let $\mathcal{O}_i$ $\mathcal{F}_i^{(u)}$ be the observability and impulse response matrices defined in \eqref{eq: expanded measurements local}. Define 
	\begin{align}\label{eq: null spaces}
	\begin{split}
	\mathcal{N}^L_i&=\left\lbrace z: z^\transpose\begin{bmatrix} \mathcal{O}_i & \mathcal{F}^{(u)}_i
	\end{bmatrix}=0^\transpose\right\rbrace,\\
	\mathcal{N}^L_{c,i}&=\left\lbrace z: z^\transpose\mathcal{O}_{c,i}=0^\transpose\right\rbrace, \text{ and }\\
	\mathcal{N}^L_{c}&=\bigcup^{N}_{i=1}\mathcal{N}^L_{c,i}.
	\end{split}
	\end{align}
	where $\mathcal{O}_{c,i}=
	\begin{bmatrix}
	\left( C_{c,i}A\right)^\transpose  & \cdots & \left( C_{c,i} A^{T}\right)^\transpose
	\end{bmatrix}^\transpose$ and $C_{c,i}=\begin{bmatrix}
	0& \cdots & C_i & \cdots & 0
	\end{bmatrix}$. Then, $\mathcal{N}^L_i\subseteq \mathcal{N}^L_{c,i}\subseteq \mathcal{N}^L_{c}$, for all $i \in \{1,\ldots,N\}$.
\end{proposition}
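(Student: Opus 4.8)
The plan is to prove the two containments separately. The second one, $\mathcal{N}^L_{c,i}\subseteq\mathcal{N}^L_c$, is immediate, since by definition $\mathcal{N}^L_c=\bigcup_{j=1}^N\mathcal{N}^L_{c,j}$ is the union that already contains $\mathcal{N}^L_{c,i}$ as one of its members. Hence all of the work lies in the first containment $\mathcal{N}^L_i\subseteq\mathcal{N}^L_{c,i}$. My strategy here is to exhibit a matrix $P_i$, built only from the system matrices, for which the full-system observability matrix associated with the $i$-th output factors through the local observability and interconnection-response matrices, namely
\[ \mathcal{O}_{c,i}=\begin{bmatrix}\mathcal{O}_i & \mathcal{F}^{(u)}_i\end{bmatrix}P_i . \]
Once this factorization is in place, the containment follows in one line: the columns of $\mathcal{O}_{c,i}$ are linear combinations of those of $\begin{bmatrix}\mathcal{O}_i & \mathcal{F}^{(u)}_i\end{bmatrix}$, so if $z^\transpose\begin{bmatrix}\mathcal{O}_i & \mathcal{F}^{(u)}_i\end{bmatrix}=0^\transpose$ then $z^\transpose\mathcal{O}_{c,i}=z^\transpose\begin{bmatrix}\mathcal{O}_i & \mathcal{F}^{(u)}_i\end{bmatrix}P_i=0^\transpose$, i.e.\ $z\in\mathcal{N}^L_{c,i}$.

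To obtain the factorization I would compute the noise-free, attack-free response of the $i$-th output in two equivalent ways and equate them. In the full model \eqref{eq: overall with attack} the free response is $y_i(k)=C_{c,i}x(k)=C_{c,i}A^kx(0)$, whose time-stacked form is exactly $\mathcal{O}_{c,i}x(0)$. Alternatively, treating the interconnection signal $u_i$ as an exogenous input to the isolated dynamics \eqref{eq: subystem without attack}, the variation-of-constants formula gives $x_i(k)=A_{ii}^kx_i(0)+\sum_{\ell=0}^{k-1}A_{ii}^{k-1-\ell}B_iu_i(\ell)$, so that the stacked output equals $\mathcal{O}_ix_i(0)+\mathcal{F}^{(u)}_iU_i$, which are precisely the first two terms in \eqref{eq: expanded measurements local}. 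Since both expressions describe the same physical output, they must agree for every $x(0)$, giving
\[ \mathcal{O}_{c,i}x(0)=\mathcal{O}_ix_i(0)+\mathcal{F}^{(u)}_iU_i . \]
Finally I would express the local quantities as linear images of the global initial state: writing $E_i$ for the selection matrix with $x_i=E_ix$ and $\ol{E}_i$ for the one with $u_i=\ol{E}_ix$, the full dynamics give $x_i(0)=E_ix(0)$ and $u_i(\ell)=\ol{E}_ix(\ell)=\ol{E}_iA^\ell x(0)$. Stacking these identities yields a matrix $P_i$ with $\begin{bmatrix}x_i(0)^\transpose & U_i^\transpose\end{bmatrix}^\transpose=P_ix(0)$, and substituting into the displayed equality gives $\mathcal{O}_{c,i}x(0)=\begin{bmatrix}\mathcal{O}_i & \mathcal{F}^{(u)}_i\end{bmatrix}P_ix(0)$ for all $x(0)$, hence the sought matrix identity.

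The routine part is the block bookkeeping that matches the lower-triangular Toeplitz structure of $\mathcal{F}^{(u)}_i$ against the convolution sum, together with a dimension check (both $\mathcal{O}_{c,i}$ and $\begin{bmatrix}\mathcal{O}_i & \mathcal{F}^{(u)}_i\end{bmatrix}$ have $r_iT$ rows, so the left-null vectors live in the same space). I expect the main conceptual obstacle to be justifying the key identity $\mathcal{O}_{c,i}x(0)=\mathcal{O}_ix_i(0)+\mathcal{F}^{(u)}_iU_i$ cleanly: one must recognize that the $i$-th block of $A^kx(0)$ is exactly the isolated-subsystem response driven by the \emph{true} interconnection signal, and that this signal is itself the fixed linear function $\ol{E}_iA^\ell x(0)$ of the global state rather than a free input, which is what allows $U_i$ to be absorbed into $P_i$. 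Everything downstream reduces to elementary linear algebra on column spaces.
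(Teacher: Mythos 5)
Your proposal is correct and takes essentially the same route as the paper's proof: both rest on the key identity $\mathcal{O}_{c,i}x(0)=\mathcal{O}_i x_i(0)+\mathcal{F}^{(u)}_i U_i$, obtained by recognizing that the $i$-th block of the noise- and attack-free global trajectory is the isolated-subsystem response driven by the true interconnection signal (the paper derives this by block-decomposing $A$ and recursively expanding, which is exactly your variation-of-constants step), after which $\mathcal{N}^L_i\subseteq\mathcal{N}^L_{c,i}$ follows because $x_i(0)$ and $U_i$ are linear functions of $x(0)$, and $\mathcal{N}^L_{c,i}\subseteq\mathcal{N}^L_c$ is trivial from the union. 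Your packaging of that linearity as an explicit factorization $\mathcal{O}_{c,i}=\begin{bmatrix}\mathcal{O}_i & \mathcal{F}^{(u)}_i\end{bmatrix}P_i$ is only a cosmetic reorganization of the paper's pointwise argument over arbitrary $x(0)$.
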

\begin{proof}
	Without loss of generality, let $i=1$. By definition, the set inclusion 
	$\mathcal{N}^L_{c,1}\subseteq \mathcal{N}^L_{c}$ is trivial. For the other inclusion, consider the system defined in \eqref{eq: overall with attack} without the attack and noise, i.e., $x(k+1)=Ax(k)$. Let $x(k)=\begin{bmatrix}x_1^\transpose(k) & u_1^\transpose(k)\end{bmatrix}^\transpose$, where $x_1(k)$ and $u_1(k)$ are the state and the interconnection signal of Subsystem $1$. Also, let
	\begin{align}\label{eq: A decomposition}
	A=\begin{bmatrix}
	A_{11} & B_1\\
	\widetilde B_1 & \widetilde A_{11} 
	\end{bmatrix}. 
	\end{align}
	Notice that, $x(k+1)=Ax(k)$ can be decomposed as
	\begin{align}\label{eq: decomposed equations}
	\begin{split}
	x_1(k+1)&=A_{11}x_1(k)+B_1u_1(k), \\
	u_1(k+1)&=\widetilde A_{11}u_1(k)+\widetilde B_1x_1(k). 
	\end{split}
	\end{align}
	By letting $\widetilde C_1=\begin{bmatrix}
	C_1A_{11} & C_1B_1
	\end{bmatrix}$ and recursively expanding $x_1(k)$ using \eqref{eq: decomposed equations}, we have 
	\begin{align}\label{eq: impulse expansion}
	C_{c,1}A^{k}x(0)&=\begin{bmatrix}
	C_1 & 0
	\end{bmatrix}AA^{k-1}x(0)\nonumber\\
	&=\widetilde C_1A^{k-1}x(0)\nonumber \\
	&=\widetilde C_1\begin{bmatrix}
	x_1(k-1)\nonumber\\
	u_1(k-1)
	\end{bmatrix}\nonumber\\
	&=\widetilde C_1\begin{bmatrix}
	A_{11}^{k-1}x_1(0)+\sum_{j=0}^{k-2}A_{11}^{k-2-j}B_1u_1(j)\nonumber\\
	u_1(k-1)
	\end{bmatrix}\\
	&=C_1A_{11}^{k}x_1(0)+\sum_{j=0}^{k-1}C_1A_{11}^{k-1-j}B_1u_1(j),
	\end{align}
	where the second, third, and fourth equalities follows from \eqref{eq: A decomposition}, system $x(k+1)=Ax(k)$, and \eqref{eq: decomposed equations}, respectively. By recalling that
	$\mathcal{O}_{c,1}x(0)=\begin{bmatrix}
	\left(C_{c,1}A\right)^\transpose  &\cdots \left(C_{c,1}A^T\right)^\transpose
	\end{bmatrix}^\transpose x(0)$, it follows from \eqref{eq: impulse expansion} that 
	\begin{align*}
	\mathcal{O}_{c,1}x(0)=\mathcal{O}_{1}x_1(0)+\mathcal{F}^{(u)}_1\begin{bmatrix}
	u_1^\transpose(0) &
	\cdots &
	u_1^\transpose(T-1)
	\end{bmatrix}^\transpose. 
	\end{align*}
	Let $z$ be any vector such that $z^\transpose\begin{bmatrix} \mathcal{O}_1 & \mathcal{F}^{(u)}_1
	\end{bmatrix}=0^\transpose$. Then, $z$ also satisfies $z^\transpose\mathcal{O}_{c,1}=0^\transpose$. Thus, $\mathcal{N}^L_1\subseteq \mathcal{N}^L_{c,1}$. 
\end{proof}

\medskip 
\begin{lemma}{\bf {\it \textbf{(Upper bound on $P^D_d$)}}}\label{lma: ub PDD}
	Let $p_i$ and $\lambda_i$ be defined as in \eqref{eq: lambda 1}, and $\tau_i$ be defined as in \eqref{eq: test_stat_local}. Let $p_{\text{sum}}=\sum_{i=1}^Np_i$, $\lambda_{\text{sum}}=\sum_{i=1}^N\lambda_i$, and $\tau_{\text{min}}=\min\limits_{1 \leq i \leq N}\tau_i$. Then, 
	\begin{align*}
	P^D_d\leq \underbrace{\mathrm{Pr}\left[S_d>\tau_{\text{min}}\right]}_{\triangleq \overline{P}^D_d}, 
	\end{align*}where $S_d\sim \chi^2(p_{\text{sum}},\lambda_{\text{sum}})$. 
\end{lemma}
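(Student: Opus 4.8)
The plan is to reduce the union-of-events probability defining $P^D_d$ to a single tail probability of a \emph{sum} of the local test statistics, and then identify that sum with $S_d$. First I would recall from the proof of Lemma \ref{lma: PFD and PDD} that the statistics $\Lambda_1,\ldots,\Lambda_N$ are mutually independent: each $\Lambda_i$ is a deterministic function of $\widetilde Y_i$, and $\widetilde Y_i$ depends only on the deterministic attack $U^a_i$ and on the noise vectors $W_i$ and $V_i$, which are independent across subsystems. Moreover, by Lemma \ref{lma: PF and PD via Q-functions}, under $H_1$ each statistic satisfies $\Lambda_i \sim \chi^2(p_i,\lambda_i)$.

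The second step invokes the reproductive property of the noncentral chi-squared family: a sum of \emph{independent} noncentral chi-squared variables is again noncentral chi-squared, with the degrees of freedom and the noncentrality parameters adding. Applying this to the mutually independent $\Lambda_1,\ldots,\Lambda_N$ gives
\[
  \sum_{i=1}^N \Lambda_i \sim \chi^2\!\left( \sum_{i=1}^N p_i,\ \sum_{i=1}^N \lambda_i \right) = \chi^2(p_{\text{sum}},\lambda_{\text{sum}}),
\]
so this sum has exactly the distribution of $S_d$.

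The decisive step is a deterministic set inclusion that trades the awkward union over heterogeneous thresholds for a single event governed by $\tau_{\text{min}}$. The decentralized detector triggers whenever $\Lambda_i \ge \tau_i$ for some $i$; on any such event, nonnegativity of the $\Lambda_j$ together with $\tau_i \ge \tau_{\text{min}}$ yields $\sum_{j=1}^N \Lambda_j \ge \Lambda_i \ge \tau_i \ge \tau_{\text{min}}$, hence
\[
  \bigcup_{i=1}^N \{\Lambda_i \ge \tau_i\} \ \subseteq\ \Bigl\{ \textstyle\sum_{j=1}^N \Lambda_j \ge \tau_{\text{min}} \Bigr\}.
\]
Taking probabilities under $H_1$, using monotonicity of the probability measure, and noting that the continuous distribution of $S_d$ places no mass at the point $\tau_{\text{min}}$ (so $\ge$ and $>$ coincide), I obtain
\[
  P^D_d = \mathrm{Pr}\!\left[ \bigcup_{i=1}^N \{\Lambda_i \ge \tau_i\} \,\Big|\, H_1 \right] \le \mathrm{Pr}\!\left[ S_d > \tau_{\text{min}} \right] = \overline{P}^D_d,
\]
which is the claim.

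The argument is largely routine, and the set-inclusion step is its conceptual core. The only point requiring care is the additive law for the distribution, which needs full \emph{mutual} (not merely pairwise) independence of the $\Lambda_i$; I would therefore stress that the cross-subsystem noise independence established earlier supplies exactly this, so the reproductive property applies verbatim.
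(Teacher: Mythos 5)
Your proof is correct and follows essentially the same route as the paper's: the deterministic inclusion $\bigcup_i \{\Lambda_i \ge \tau_i\} \subseteq \{\sum_j \Lambda_j \ge \tau_{\text{min}}\}$, monotonicity of probability, and the reproducibility property of the noncentral chi-squared family applied to the independent local statistics. Your two added points of care---stressing that \emph{mutual} (not just pairwise) independence of the $\Lambda_i$ is what licenses the additive law, and noting that continuity of the distribution of $S_d$ reconciles $\ge$ with $>$ at $\tau_{\text{min}}$---are refinements the paper leaves implicit, but they do not change the argument.
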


\begin{proof}
	Consider the following events:
	\begin{align*}
	\mathcal{V}_i&=\left\lbrace \widetilde Y_i^\transpose\Sigma^{-1}_i\widetilde Y_i\geq \tau_i\right\rbrace \text{ for all } i \in \{1,\ldots,N\}, \text{ and }\\
	\mathcal{V}&=\left\lbrace \sum_{i=1}^{N}\widetilde Y_i^\transpose\Sigma^{-1}_i\widetilde Y_i\geq \tau_\text{min}\right\rbrace, 
	\end{align*}
	where the event $\mathcal{V}_i$ is associated with the $i-$th local detector's threshold test. From the definition of the above events, it is easy to note that 
	$\bigcup_{i=1}^{N}\mathcal{V}_i\subseteq \mathcal{V}$. 
	By the monotonicity of the probability measures, it follows that
	\begin{align*}
	P^D_d\triangleq \mathrm{Pr}\left[\bigcup_{i=1}^{N}\mathcal{V}_i\left.\right\vert H_1\right]\leq \mathrm{Pr}\left[\mathcal{V}\left.\right\vert H_1\right]. 
	\end{align*}
	From the reproducibility property of the noncentral chi-squared distribution \cite{JNL-KS-BN:95}, it now follows that $\sum_{i=1}^{N}\widetilde Y_i^\transpose\Sigma^{-1}_i\widetilde Y_i$ equals $S_d$ in distribution and hence, $\mathrm{Pr}[\mathcal{V}|H_1]=\mathrm{Pr}[S_d> \tau_\text{min}]$.
\end{proof}

\medskip 
\begin{lemma}{\bf {\it \textbf{(Exponential bounds on the tails of $\chi^2(p,\lambda)$)}}}
	Let $Y \sim \chi^2(p,\lambda)$, $\mu=p+\lambda$, $\sigma=\sqrt{2(p+2\lambda)}$. For all $x>0$,
	\begin{subequations}
		\begin{align}
		\mathrm{Pr}\left[Y\geq \mu+\sigma\sqrt{2x}+2x\right]&\leq \exp(-x) \label{eq: tail bound one}\\
		\mathrm{Pr}\left[Y\leq \mu-\sigma\sqrt{2x}\right]&\leq \exp(-x) \label{eq: tail bound two}
		\end{align}
	\end{subequations}
\end{lemma}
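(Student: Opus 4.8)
The plan is to use the Cram\'er--Chernoff (exponential Markov) method together with the closed-form moment generating function of a noncentral chi-squared random variable; the two inequalities are precisely the noncentral analogues of the classical Laurent--Massart tail bounds. First I would recall that for $Y\sim\chi^2(p,\lambda)$ one has, for every $t<1/2$,
\begin{align*}
  \mathbb{E}\!\left[e^{tY}\right] = (1-2t)^{-p/2}\exp\!\left(\frac{\lambda t}{1-2t}\right),
\end{align*}
so that the centered cumulant generating function $\psi(t)\triangleq \ln\mathbb{E}[e^{t(Y-\mu)}]$, with $\mu=p+\lambda$, equals $\tfrac{p}{2}\!\left[-2t-\ln(1-2t)\right]+\tfrac{2\lambda t^2}{1-2t}$. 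A direct computation gives $\psi'(0)=0$ and $\psi''(0)=2(p+2\lambda)=\sigma^2$, which also re-derives the mean and variance used elsewhere in the paper.

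For the upper tail \eqref{eq: tail bound one} I would first establish the sub-gamma cumulant bound $\psi(t)\le \tfrac{(p+2\lambda)t^2}{1-2t}=\tfrac{\sigma^2 t^2/2}{1-2t}$, valid for $0<t<1/2$. This follows from the elementary inequality $-\ln(1-u)-u\le \tfrac{u^2}{2(1-u)}$ applied with $u=2t$ to the logarithmic term, leaving $\tfrac{2\lambda t^2}{1-2t}$ untouched. With this bound, the Chernoff estimate $\mathrm{Pr}[Y-\mu\ge s]\le \exp\!\big(\inf_{0<t<1/2}[\psi(t)-ts]\big)$ becomes a standard Bernstein/sub-gamma inversion with variance factor $\sigma^2$ and scale $c=2$, which yields $\mathrm{Pr}[Y-\mu\ge \sqrt{2\sigma^2 x}+2x]\le e^{-x}$ for all $x>0$; since $\sqrt{2\sigma^2 x}=\sigma\sqrt{2x}$, this is exactly \eqref{eq: tail bound one}.

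The lower tail \eqref{eq: tail bound two} is the easier, sub-Gaussian direction. Repeating the computation with $t=-\theta$, $\theta>0$, gives $\ln\mathbb{E}[e^{-\theta(Y-\mu)}]=\tfrac{p}{2}\!\left[2\theta-\ln(1+2\theta)\right]+\tfrac{2\lambda\theta^2}{1+2\theta}$, and the inequality $u-\ln(1+u)\le u^2/2$ (with $u=2\theta$) together with $\tfrac{2\lambda\theta^2}{1+2\theta}\le 2\lambda\theta^2$ yields $\ln\mathbb{E}[e^{-\theta(Y-\mu)}]\le (p+2\lambda)\theta^2=\tfrac{\sigma^2\theta^2}{2}$. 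Hence $Y-\mu$ is sub-Gaussian with proxy $\sigma^2$ in the negative direction, and minimizing $\exp(\tfrac{\sigma^2\theta^2}{2}-\theta s)$ over $\theta>0$ gives $\mathrm{Pr}[Y-\mu\le -s]\le \exp(-s^2/(2\sigma^2))$; substituting $s=\sigma\sqrt{2x}$ produces \eqref{eq: tail bound two}.

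The main obstacle is the upper-tail step: one must carry out the Chernoff minimization so that the deviation threshold comes out \emph{exactly} as $\sigma\sqrt{2x}+2x$ rather than a looser quantity. Instead of minimizing $\psi(t)-ts$ by hand (which leads to a messy quadratic in $t$), I would invoke the generic sub-gamma inversion: from $\psi(t)\le \tfrac{\sigma^2 t^2/2}{1-2t}$ one obtains $\mathrm{Pr}[Y-\mu\ge s]\le \exp\!\big(-\tfrac{\sigma^2}{4}\,h_1(2s/\sigma^2)\big)$ with $h_1(u)=1+u-\sqrt{1+2u}$, and then verifies $\tfrac{\sigma^2}{4}\,h_1(2s/\sigma^2)\ge x$ at $s=\sigma\sqrt{2x}+2x$. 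The two elementary logarithmic inequalities used above are precisely what make this verification close in closed form, so the only real work is this final algebraic check.
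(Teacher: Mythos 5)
Your proof is correct, but it is worth noting that the paper does not actually prove this lemma: its entire proof is the citation ``See \cite{birge}.'' Your argument therefore takes a genuinely different (self-contained) route, and it is essentially a reconstruction of the standard Cram\'er--Chernoff derivation behind Birg\'e's noncentral Laurent--Massart bounds. The key steps all check out: the centered cumulant generating function is indeed $\psi(t)=\tfrac{p}{2}\left[-2t-\ln(1-2t)\right]+\tfrac{2\lambda t^2}{1-2t}$; the elementary inequality $-\ln(1-u)-u\le u^2/\left(2(1-u)\right)$ applied with $u=2t$ gives the sub-gamma bound $\psi(t)\le \tfrac{\sigma^2 t^2/2}{1-2t}$ with variance factor $\sigma^2=2(p+2\lambda)$ and scale $c=2$; and the generic sub-gamma inversion via $h_1(u)=1+u-\sqrt{1+2u}$, whose inverse is $h_1^{-1}(x)=x+\sqrt{2x}$, returns the deviation threshold $\sigma\sqrt{2x}+2x$ exactly, not merely up to constants. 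The lower tail via $u-\ln(1+u)\le u^2/2$ and $\tfrac{2\lambda\theta^2}{1+2\theta}\le 2\lambda\theta^2$ is likewise sound, and the substitution $s=\sigma\sqrt{2x}$ closes it. What your approach buys is self-containedness and a transparent account of where the constants come from (sub-Gaussian behavior of the lower tail, Bernstein-type behavior of the upper tail); what the paper's citation buys is brevity, at the cost of leaving the reader to extract exactly these computations from the reference.
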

\begin{proof}
	See \cite{birge}.
\end{proof}

\end{document}